\newcommand{\m}{\mathrm{M}}
\newcommand{\mg}{\mathrm{MG}}
\newcommand{\defstyle}[1]{\textbf{#1}}
\newcommand{\myprob}[1]{\mathbb P \left[ #1 \right]}
\newcommand{\probPalm}[2]{\mathbb P_{#1} \left[ #2 \right]}
\newcommand{\omid}[1]{\mathbb E \left[ #1 \right]}
\newcommand{\omidPalm}[2]{\mathbb E_{#1} \left[ #2 \right]}
\newcommand{\omidCond}[2]{\mathbb E \left[ #1 \left| #2 \right. \right]}
\newcommand{\norm}[1]{\left| #1 \right|}
\newcommand{\identity}[1]{1_{#1}}
\newcommand{\bs}[1]{\boldsymbol{#1}}
\newcommand{\card}[1]{\left|#1\right|}
\newcommand{\cost}{\mathrm{Cost}}
\newcommand{\floor}[1]{\left\lfloor #1 \right\rfloor}
\newcommand{\ceil}[1]{\left\lceil #1 \right\rceil}
\newcommand{\del}[1]{}
\newcommand{\unwritten}[1]{}
\newcommand{\oball}[2]{B_{#1}(#2)}
\newcommand{\restrict}[2]{{
		\left.\kern-\nulldelimiterspace 
		#1 
		\vphantom{\big|} 
		\right|_{#2} 
}}
\theoremstyle{theorem}
\newtheorem{theorem}{Theorem}[section]
\newtheorem{lemma}[theorem]{Lemma}
\newtheorem{proposition}[theorem]{Proposition}
\newtheorem{problem}[theorem]{Problem}
\theoremstyle{definition}
\newtheorem{definition}[theorem]{Definition}
\newtheorem{example}[theorem]{Example}
\theoremstyle{definition}
\newtheorem{remark}[theorem]{Remark}
\crefname{model}{Model}{Models}
\crefname{submodel}{Model}{Models}
\crefname{algorithm}{Algorithm}{Algorithms}
\crefname{equation}{}{}
\crefname{problem}{Problem}{Problems}
\crefname{theorem}{Theorem}{Theorems}
\crefname{corollary}{Corollary}{Corollaries}
\crefname{proposition}{Proposition}{Propositions}
\theoremstyle{theorem}
\numberwithin{equation}{section}
\let\orgdescriptionlabel\descriptionlabel
\renewcommand*{\descriptionlabel}[1]{%
	\let\orglabel\label
	\let\label\@gobble
	\phantomsection
	\edef\@currentlabel{#1}%
	\let\label\orglabel
	\orgdescriptionlabel{#1}%
}
\newlist{stepC}{enumerate}{1}
\setlist[stepC,1]{
	label=\textbf{Step C\arabic*},
	ref=C\arabic*,
	leftmargin=* 
}
\crefname{stepCi}{Step}{Steps}
\Crefname{stepCi}{Step}{Steps}
\newlist{stepCC}{enumerate}{1}
\setlist[stepCC,1]{
	label=\textbf{Step C\arabic{stepCi}.\arabic*},
	ref=C\arabic{stepCi}.\arabic*,
	leftmargin=* 
}
\crefname{stepCCi}{Step}{Steps}
\Crefname{stepCCi}{Step}{Steps}
\newlist{stepCCprime}{enumerate}{1}
\setlist[stepCCprime,1]{
	label=\textbf{Step C\arabic{stepCi}.\arabic*$'$},
	ref=C\arabic{stepCi}.\arabic*$'$,
	leftmargin=* 
}
\crefname{stepCCprimei}{Step}{Steps}
\Crefname{stepCCprimei}{Step}{Steps}
\newlist{stepL}{enumerate}{1}
\setlist[stepL,1]{
	label=\textbf{Step L\arabic*},
	ref=L\arabic*,
	leftmargin=* 
}
\crefname{stepLi}{Step}{Steps}
\Crefname{stepLi}{Step}{Steps}
\newlist{stepLL}{enumerate}{1}
\setlist[stepLL,1]{
	label=\textbf{Step L\arabic{stepLi}.\arabic*},
	ref=L\arabic{stepLi}.\arabic*,
	leftmargin=* 
}
\crefname{stepLLi}{Step}{Steps}
\Crefname{stepLLi}{Step}{Steps}
\begin{document}

\title{Products of Infinite Countable Groups Have Fixed Price One}

\author{Ali Khezeli \footnote{School of Mathematics, Institute for Research in Fundamental Sciences, Tehran, Iran, alikhezeli@ipm.ir}}

\maketitle

\begin{abstract}
	We prove that the product of any two infinite countable groups has fixed price one. This resolves a longstanding problem posed by Gaboriau. The proof uses the propagation method to construct a Poisson horoball process as a weak limit of a sequence of factors of iid. We then construct a low-cost graphing by showing that the resulting horoballs have a variant of the infinite touching property almost surely, if the metric and the other parameters of the construction are chosen carefully. A novelty is providing direct simple proofs that do not rely on sophisticated results like amenability and double-recurrence, which are used in related works. An essential tool for avoiding any growth conditions is the convergence in the sense of point processes of pointed closed subsets, which is a notion from stochastic geometry. Also, to manage the overlapping of the horoballs, a generalization of the induction lemma is presented for random multisets of a group.
%
\end{abstract}


\section{Introduction}

\subsection{Cost and the Fixed Price Property}

Cost is a central notion in measured group theory, introduced in~\cite{Le95cost}. It was substantially developed by Gaboriau in the seminal paper~\cite{Ga00cost} and is used therein to solve several problems in orbit equivalence theory. Roughly speaking, the \defstyle{cost} of a countable group $G$ is the infimum \textit{number of edges per vertex} required for connecting all of the points of $G$ in a stationary random manner. The cost of an action of $G$ is also defined similarly, see \Cref{intro:weakFactor}, and is a measure theoretic analogue of the minimal number of generators.
Cost has proved to be an important invariant of group actions by being connected to several other notions, like orbit equivalence theory, geometric group theory, rank gradient, percolation on graphs, uniform spanning forests, point process theory, $l^2$-Betti numbers, and operator algebras. There is active research on this topic and its open problems, some of which are mentioned below. See also Gaboriau's under progress notes~\cite{Ga24}.
This notion is also extended to locally compact unimodular groups in~\cite{Ca23,AbMe22}, where the second reference uses point processes theory.

The group $G$ has \defstyle{fixed price} if all essentially free actions of $G$ have the same cost. 
It is a fundamental open problem, dating back to the origins of the theory~\cite{Ga00cost}, to determine whether all countable groups have fixed price.
We mention some of the previous works in this matter.
In~\cite{Ga00cost}, the cost of several classes of groups are computed; e.g., finite groups (fixed price $1-1/\norm{G}$), free group with $n$ generators (fixed price $n$), $SL(n,\mathbb Z)$ and fundamental groups of compact surfaces. It is also proved in~\cite{Ga00cost} that lattices in higher rank semi-simple real Lie groups have cost 1. It remained open whether the latter have fixed price 1 until very recently, proved in the landmark~\cite{FMW} using novel methods explained below.
Some identities are also proved in~\cite{Ga00cost}; e.g., for finite-index subgroups (see the \textit{induction lemma}) and free products. It is also proved that a \textit{treeable} group (i.e., a group that admits a graphing which is almost surely a tree) has fixed price. Infinite groups have cost at least 1, and this lower bound is improved in~\cite{Ga02invariants} to 1 plus the first $l^2$-Betti number (which is equal to the cost of the \textit{free uniform spanning forest}; see~\cite{bookLyPe16}). It is still open whether the last lower bound is always equal to the cost. 
Several classes of groups have been proved to have fixed price 1; e.g., amenable groups and groups with an infinite normal subgroup (e.g., with infinite center). Recently, \cite{HuPe20Kazhdan} proved that Kazhdan groups have cost 1. The latter is extended to unimodular random graphs with property (T) in~\cite{GrJaMe25cost}.

It is also proved in~\cite{Ga00cost} that the direct product of any pair of infinite countable groups has cost 1 (see \Cref{intro:motivation} below). 
The fixed price problem for such products was still open until now, despite being an active problem in the field.\footnote{See e.g., the list of problems of the 2011 AIM workshop \textit{$l^2$ invariants and their relatives for finitely generated groups}, available at \hyperlink{http://aimpl.org/l2invariantsgroups/2}{http://aimpl.org/l2invariantsgroups/2}.} It has been proved in the special cases where one of the groups is amenable, or contains an element with infinite order, or contains arbitrarily large finite subgroups, or contains an infinite subgroup which has fixed price 1 (see \cite{Ga00cost,Ga24}), but the general case is much more difficult. The recent papers~\cite{AbMe22,FMW,Me23} use novel methods and prove the claim for further classes of products of groups. We will briefly explain these works in this section and study their connections with the present work. 

In this paper, we fully resolve the fixed price problem for direct products of countable infinite groups:

\begin{theorem}
	\label{thm:fixedpriceGeneral}
	The product of any two infinite countable groups has fixed price one.
\end{theorem}

The proof is provided in \Cref{sec:proof}. We will first prove this claim for finitely generated groups (\Cref{thm:fixedpriceFG}). Gaboriau pointed out\footnote{Personal communication in September 2025 after the first version of the paper.} that this can be easily extended to products of general countable groups using already existing results. Before giving the proofs, \Cref{intro:sketch} describes the intuitions of the main steps and ideas, together with providing further results.

In the ongoing work~\cite{JaKhMe25+}, a similar method is used to prove the analogous claim for products of locally compact unimodular groups.

\subsection{Ideas, Sketch of the Proof, and Further Results}
\label{intro:sketch}

Throughout the paper, we let $G''=G\times G'$ be the product of two infinite countable groups. 
Let $o$ and $o'$ be the neutral elements of $G$ and $G'$ respectively, and $o'':=(o,o')$. 
As mentioned, we first prove the claim when two groups are finitely generated. Equip each of $G$ and $G'$ with a Cayley graph and let $d$ and $d'$ be the resulting graph-distance metrics. 

\subsubsection{Weak Containment and Weak Factor}
\label{intro:weakFactor}

The cost of a stationary random marking of $G$, or more generally, a probability-measure-preserving (pmp) action of $G$, is defined similarly, by requiring the graph to be a factor of the random marking/action, where \textit{factor} means a measurable $G$-equivariant function (without additional randomness). 
Such a factor graph is called a \defstyle{graphing} of the random marking/action. In fact, this definition requires that the random marking/action is essentially free; i.e., almost every sample has a trivial stabilizer (otherwise, this definition would be different from the original definition of cost).

The following fundamental results are important for studying the maximum cost of group actions.
Kechris proved in~\cite{bookKe10} that, for a finitely generated group $\Gamma$, the cost of essentially free $\Gamma$-actions is monotone under \textbf{weak containment}; see \Cref{ap:weakFactor} for the definition.
In~\cite{AbWe11}, it is proved that every iid marking of $\Gamma$ is weakly contained in any essentially free $\Gamma$-action, and hence, the iid markings have the maximum possible cost among essentially free $\Gamma$-actions. 

Inspired by the above results, the notion of \textbf{weak factor} (i.e., the weak limit of a sequence of factors) is defined in~\cite{AbMe22} for point processes on locally compact groups (for simplicity, we assume in this paragraph that $\Gamma$ is non-discrete and compactly generated). In~\cite{AbMe22}, a cost monotonicity result is proved for weak factors as well. It is also proved that every essentially free pmp action is \textit{isomorphic} to a point process, and that the Poisson point process is a weak factor of every essentially free action. Hence, the Poisson point process attains the maximum cost. 
This has proved to be a powerful tool in the theory of cost. In particular, several papers prove fixed price 1 for some classes of groups by  constructing a (marked) point process which is a weak factor of Poisson and has cost 1; see e.g., \cite{AbMe22,FMW,Me23}, discussed further below.

In this paper, for countable groups, we deduce from Theorem~5.10 of~\cite{AbMe22} the following slightly generalized cost-monotonicity,\footnote{When the space is compact (or locally compact), this result was known by the experts and we do not claim originality. In this case, the second claim of the lemma is implied by Theorem~2.25 of~\cite{FMW}. The case of non-locally-compact spaces seems to be new. 
} 
which covers arbitrary continuous pmp actions (not just point processes), proved in \Cref{ap:weakFactor}:
\begin{lemma}[Cost Monotonicity]
	\label{lem:monotonicity}
	Assume $\Gamma$ is finitely generated and $\alpha$ and $\alpha'$ are pmp actions of $\Gamma$. If $\alpha$ is continuous and is a weak factor of $\alpha'$, then $\cost(\alpha)\geq \cost(\alpha')$. Therefore, if one also has $\cost(\alpha)=1$ and $\alpha'$ is an iid marking, then $\Gamma$ has fixed price 1.
\end{lemma}
We also provide an alternate proof of this lemma based on the following result, which seems to be new. 
In fact, this also gives an alternate proof for the discrete case of the cost-monotonicity result of~\cite{AbMe22}.

\begin{lemma}
	\label{lem:WFvsWC}
	Let $\alpha:\Gamma\curvearrowright(X,\mu)$ and $\alpha':\Gamma\curvearrowright(X',\mu')$ be pmp actions. 
	\begin{enumerate}[label=(\roman*)]
		\item \label{lem:WFvsWC:1} If $\alpha$ is a continuous action and is a weak factor of $\alpha'$, then $\alpha$ is weakly contained in $\alpha'$.
		\item \label{lem:WFvsWC:2} If $X$ is compact and $\alpha$ is weakly contained in $\alpha'$, then $\alpha$ is a weak factor of $\alpha'$. In the noncompact case, an extension of $\alpha$ to a metrizable compactification of $X$ is a weak factor of $\alpha'$.
	\end{enumerate}
\end{lemma}

\subsubsection{Motivation: Vertically-Constant Markings}
\label{intro:motivation}

Gaboriau's problem was motivated by having shown that $G''$ admits an essentially free action with cost 1 (\cite{Ga00cost}). 
An example of such an action is the product of an essentially free action of $G$ and one of $G''$. To use the ideas later, we sketch a simple proof for the special case $(\bs m, \bs m'')$, where $\bs m$ is an iid marking of $G$ and $\bs m''$ is an iid marking of $G''$ independent from $\bs m$.
For $x\in G$, let $\tau(x)$ be the closest element of $G$ to $x$ that satisfies $\bs m(\tau(x))>\bs m(x)$ (if there are ties, choose the one with the smallest value of $\bs m$). Connect every point $(x,x')\in G''$ to $(\tau(x),x')$. This creates a forest on $G''$ with horizontal edges such that every point has exactly one outgoing edge. Hence, the expected degree of $o''$ is 2 (by the \textit{mass transport principle}). Also, every connected component is infinite.
Then, a low-cost connected graphing of $G''$ is obtained using the following \defstyle{infinite semi-touching technique}:\footnote{This technique can be used for factor graphs of other actions as well. In fact, for essentially free actions, it is possible to avoid extra randomness in~\eqref{eq:infiniteTouching} for merging the components, but this is not needed for the purpose of this paper. See Lemma~I.14 of~\cite{Ga00cost} or the technique of \textit{commuting relations} in~\cite{Ga24}.}
\begin{equation}
	\label{eq:infiniteTouching}
	\parbox{0.82\linewidth}{
		\emph{If a disconnected factor graph is given and two of its connected components have the infinite semi-touching property; i.e., they contain infinite sequences $(x_i)_i$ and $(y_i)_i$ of distinct points respectively such that $\sup_i d(x_i,y_i)<\infty$, then the two components are merged a.s. after adding a small bond percolation.}
	}
\end{equation}
The last bond percolation can be added as a factor of $\bs m''$, which completes the proof. 
Note that infinite semi-touching is weaker than \textit{infinite touching}; i.e., the existence of infinitely many edges between the two components, which is proved for the cells of the tessellation in~\cite{FMW}.

An iid marking of $G$ can also be regarded as a marking of $G''$ that depends only on the first coordinate in an iid manner.  The latter is called a \defstyle{vertically-constant iid marking} in this paper. 
According to the above claim, it is natural to ask whether the vertically-constant iid marking is a weak factor of iid or not. If it were, then the claim of \Cref{thm:fixedpriceFG} would follow. But the next two propositions show that this holds only for amenable $G'$ (which is already known to satisfy Gaboriau's problem).

\begin{proposition}
	\label{prop:amenable}
	If $G'$ is amenable, then the vertically-constant iid marking, defined above, is a weak factor of iid
\end{proposition}
The proof uses the \textit{propagation method} of~\cite{AbMe22,Me23} (see Proposition~6.6 of~\cite{AbMe22} and Theorem~3.1 of~\cite{Me23}). We only provide an sketch since we will not use this result directly.

\begin{proof}[Sketch of the proof]
	Let $F'_1,F'_1,\ldots$ be a F{\o}lner sequence in $G'$. For every $n$, let $\Phi_n$ be a Bernoulli point process on $G''$ with intensity $\epsilon_n:=1/\norm{F'_n}$ (i.e., keep the points randomly with probability $\epsilon_n$, and delete them otherwise, independently). 
	Consider the collection $\{(x'' F'_n, \bs m''(x'')): x''\in \Phi_n\}$ of marked copies of $F'_n$, where $\bs m''$ is an iid marking of $G''$. One can prove that the weak limit of this process is a (Poisson) random collection of \textit{vertical sections} of the form $\{x\}\times G'$, where each section carries a single random mark. It is then straightforward to obtain a vertically-constant iid marking using a Voronoi tessellation in $G$.
	%
\end{proof} 

The nonamenable case is refuted by Russell Lyons\footnote{Personal communication in December 2025. This was posed as a problem in earlier versions of the paper.
} as follows:
\begin{proposition}
	\label{prop:nonamenable}
	If $G'$ is nonamenable and finitely generated, then every vertically-constant random marking of $G''$ that is a weak factor of iid, is essentially constant.
\end{proposition}
\begin{proof}
	If there is a counterexample, one may apply a $\{0,1\}$-valued function to the marks and obtain a counterexample with mark space $\{0,1\}$. 
	So, it is enough to prove the claim when the mark space is $\{0,1\}$. Let $\bs m''$ be an iid marking of $G''$ with mark space $[0,1]$ and $\bs n''$ be any factor of $\bs m''$ with mark space $\{0,1\}$. 
	Let $A$ be the event $\bs n''(o'')=1$ and $A'$ be the event that $\bs n''((o,x'))=1$ for some $x'$ adjacent to $o'$ (in some fixed Cayley graph of $G'$).
	Let $q:=\myprob{A}$ and $q':=\myprob{A'}$.
	The random marking $\bs n''((o,\cdot))$ of $G'$ is also a factor of the iid marking $x'\mapsto \bs m''(\cdot,x')$ of $G'$ (the latter has mark space ${[0,1]}^G$). Therefore, Lemma~2.3 of~\cite{LyNa11} implies that $q'\geq q/(q+\rho(1-q))$, where $\rho$ is the spectral radius of the simple random walk on $G'$, and $\rho<1$ by nonamenability. It follows that, if $q$ is away from 0 and 1, then $\myprob{A'\setminus A}\geq q'-q>0$ is also away from 0. Since $A'\setminus A$ is clopen in ${\{0,1\}}^{G''}$, this implies that a sequence of such $\bs n''$ cannot converge weakly to a vertically-constant marking except when $q$ converges to 0 or 1, which implies that the limit is essentially constant.
\end{proof}

\subsubsection{A Poisson horoball process}
\label{intro:poisson}

\begin{figure}[t]
	\begin{center}
		\includegraphics[width=.49\textwidth]{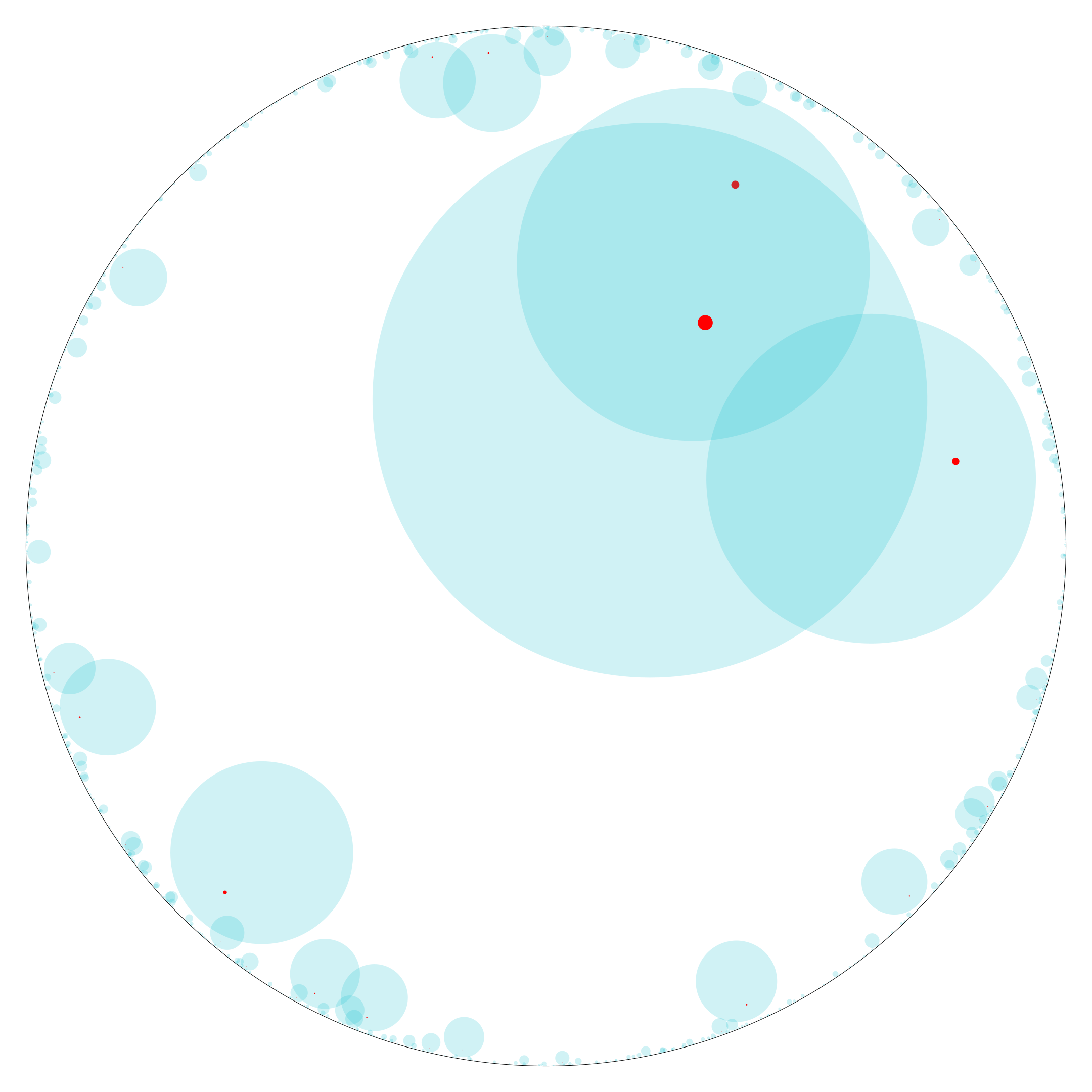}
		\includegraphics[width=.49\textwidth]{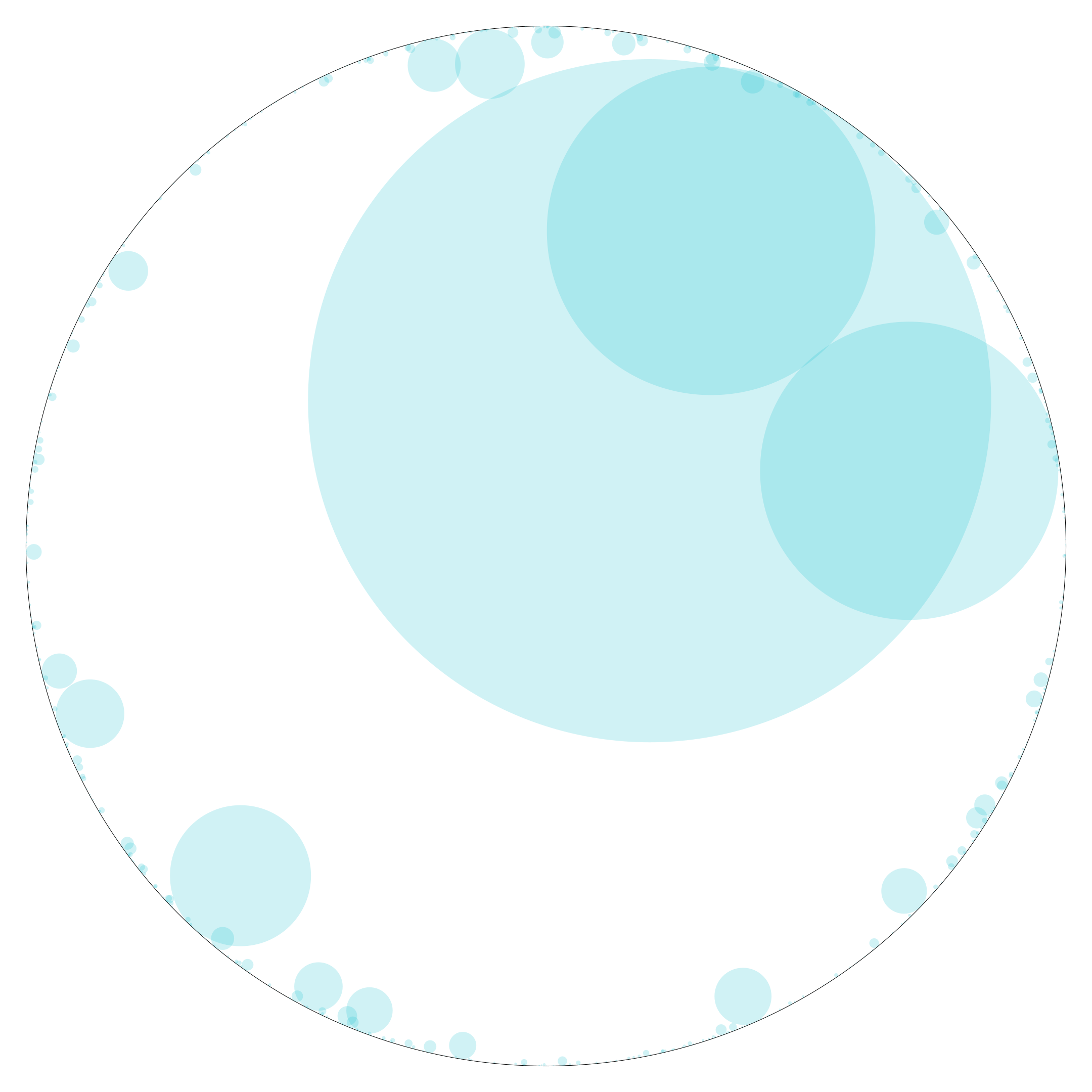}
		\caption{In the Poincar\'e model of the hyperbolic plane, a Poisson random collection of balls of fixed radius (see~\Cref{intro:poisson}) is shown on the left, and a Poisson horoball process is shown on the right. The centers of the balls are shown in red.}
		\label{fig:hyperbolic}
	\end{center}
\end{figure}

In the nonamenable case, we start by trying a propagation method similar to the one in the proof of \Cref{prop:amenable}. By \Cref{prop:nonamenable}, one cannot hope to obtain a vertically-constant iid marking in the limit. Nevertheless, we will show that the limiting process (a \textit{Poisson horoball process}) is a weak factor of iid and can be used for constructing a low-cost graphing, if $G$ and $G'$ have a nice growth behavior (mentioned later in~\eqref{eq:ratio}) that will be described in the next subsections. We will then show in \Cref{intro:perturb} how to remove the growth condition. Hence, \Cref{lem:monotonicity} implies fixed price 1.

Consider the following propagation method:
Choose a Bernoulli process on $G''$ with a small parameter $\epsilon$; i.e., keep every point with probability $\epsilon$. Then, for a suitable metric on $G''$ that will be described below, and for every point $x''$ of the Bernoulli process, put a large ball in $G''$ centered at $x''$ whose volume is proportional to $\epsilon^{-1}$. This results in a random collection of balls (see \Cref{subsec:pp} for the topological background). 
Equip the interior of each ball with a vertically-constant iid marking. As mentioned above, one cannot obtain a vertically-constant marking of $G''$ in the limit. 
Instead, each ball converges in a suitable sense to (roughly) a \textit{horoball}. Horoballs are described in \Cref{subsec:boundary}; imagine a large circle or diamond in the plane that converges to a half-plane. Then, one obtains a \textit{Poisson horoball process} by considering a subsequential limit (see \Cref{sec:proof} for details), where each horoball is equipped with a vertically-constant iid marking inside it.

In the above construction, the balls in $G''$ are defined using the following weighted~$L^1$ metric:
\begin{equation}
	\label{eq:rho}
	\rho_c((x,x'),(y,y')):=d(x,y)+ d'(x',y')/c,
\end{equation}
where $0<c<\infty$ will be determined later. The balls with the metric $\rho_c$ have a diamond-like shape.
As schematic images, the ball/horoball processes are shown in Figures~\ref{fig:hyperbolic} and~\ref{fig:tree} for the hyperbolic plane and the 3-regular tree respectively, although these spaces do not have a product structure (in the hyperbolic plane, the centers of the balls form a Poisson point process). For products, it is useful to imagine horoballs in the plane or $\mathbb Z^2$, see Figure~\ref{fig:type} in \Cref{subsec:boundary} (but the ball process in $\mathbb Z^2$ has a trivial limit). 

\subsubsection{A graphing on the union of the horoballs}
\label{intro:graphing}

\begin{figure}[t]
	\begin{center}
		\includegraphics[width=.49\textwidth]{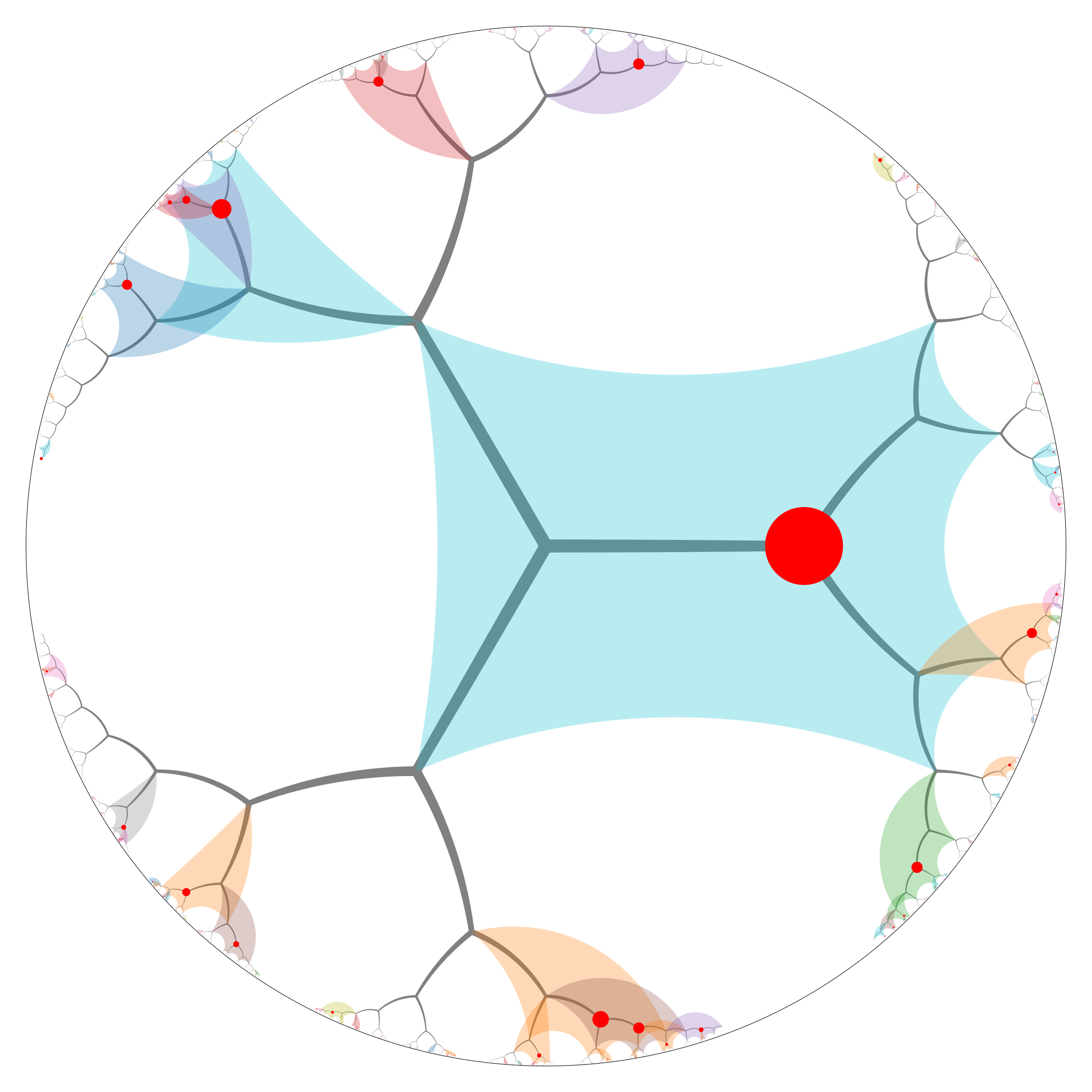}
		\includegraphics[width=.49\textwidth]{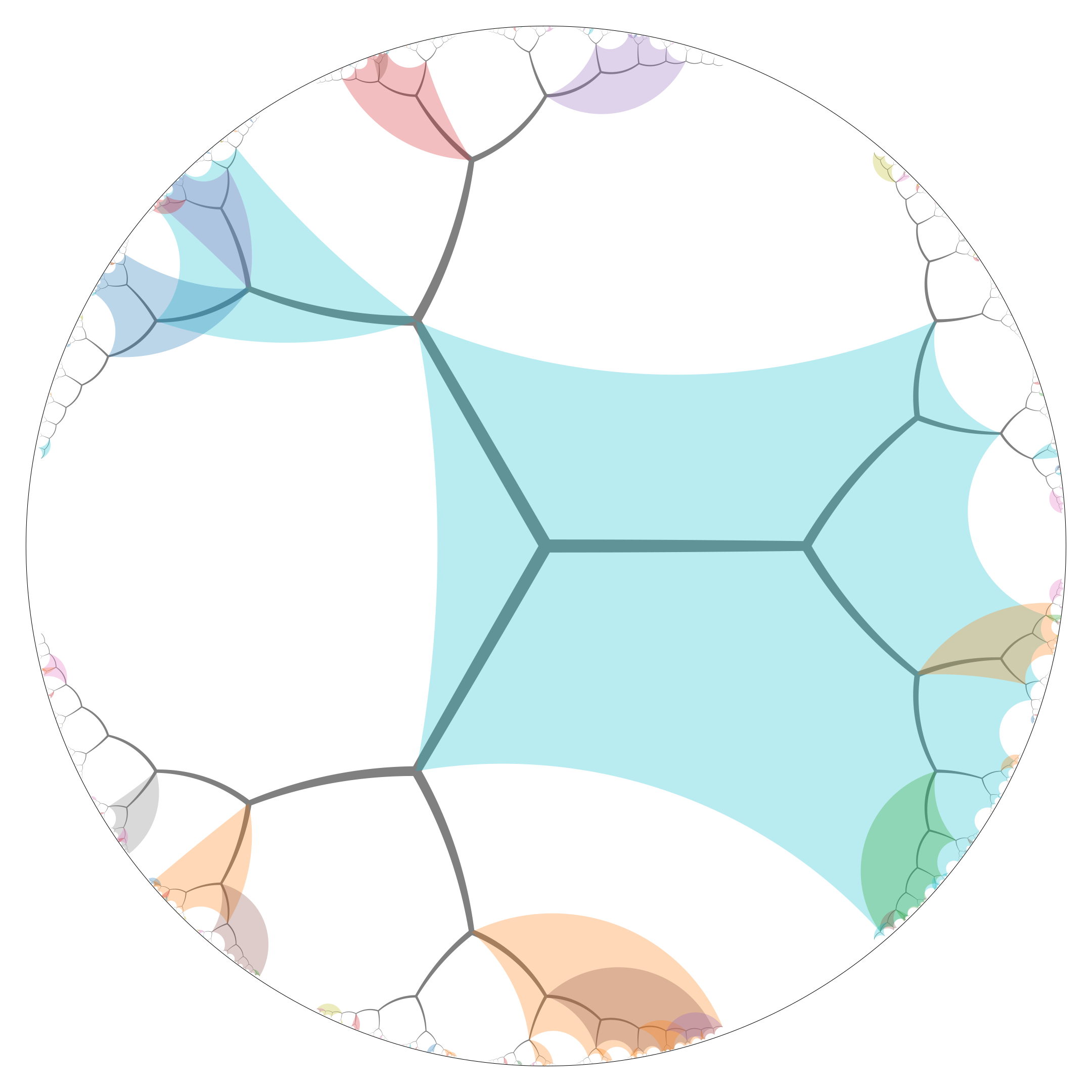}
		\caption{Models on the 3-regular tree, defined similarly to Figure~\ref{fig:hyperbolic}.
		}
		\label{fig:tree}
	\end{center}
\end{figure}

Given the Poisson horoball process, defined above, we explicitly construct a graph on the union of the horoballs  as follows. First, inside each horoball, we proceed similarly to the proof of \Cref{prop:amenable} by constructing a horizontal forest with exactly one outgoing edge for every vertex, and then, adding a small percolation; see \Cref{sec:proof} for how this is done (it turns out that this construction can be done without the vertically-constant iid marking inside the horoballs, and hence, this marking will not be defined in \Cref{sec:proof}). In order to use the infinite semi-touching technique, it is necessary that all horizontal and vertical sections of the horoballs are infinite almost surely. This will be proved by a certain volume growth condition~\eqref{eq:ratio} discussed later in \Cref{intro:growth}. We will also discuss in \Cref{intro:perturb} how to get rid of the growth condition.

Second, we will show that the horoballs in the Poisson horoball process have pairwise infinite semi-touching~\eqref{eq:infiniteTouching} (under the same growth condition), and hence, adding a small percolation connects all horoballs. This property will be proved elementarily by constructing two paths with \textit{equal slopes} $c$ in the two horoballs, that remain in bounded distance (see Figures~\ref{fig:type} and the figure in the proof of \Cref{lem:infiniteTouching}). So, a connected graph is obtained on the union of the horoballs. In the next subsection, we will then describe how to obtain a graphing on the whole $G''$.

We highlight that the above construction is elementary and does not rely on amenability nor on double-recurrence, which are used in~\cite{FMW}. We do not prove or disprove whether a typical horoball is amenable (if it were amenable, a cost 1 graphing inside the horoball would exist, which is the case for a typical cell in~\cite{FMW}). Also, the proof of infinite semi-touching does not rely on the notion of double-recurrence (nor mixing), but the construction of the paths with equal slopes are inspired from~\cite{FMW}.
See \Cref{intro:relatedWorks} for connections and comparison with~\cite{FMW} and the recent paper~\cite{BeBo25cost}.

\subsubsection{Induction lemma for multi-sets}
\label{intro:induction}

Having constructed a graphing on the union $\bs S$ of the horoballs, we now describe how to obtain a low-cost graphing on the whole $G''$.
In the above construction, it should be noted that the horoballs may overlap and do not cover $G''$. In the construction, we regard the multiple points as distinct points (which are still connected to each other by the constructed graphing). 
Since every vertex had exactly one outgoing edge before adding the percolation, the expected degree of a \textit{typical point} of $\bs S$ is arbitrarily close to 2 (this notion will be formalized in \Cref{sec:induction} using Palm theory).
If there were no multiple points, the \textit{induction lemma} would result in a low-cost graphing on the whole $G''$; see~\Cref{subsec:cost} and Equation~\eqref{eq:induction}. 
To manage the multiplicities, we prove the following version of the induction lemma for random \textit{multisets} in a group. More precisely, we use the notion of \textit{marked point processes} from stochastic geometry, which allows multiple points. 
In \Cref{sec:induction}, the notions of graphing and cost are extended to marked point processes, and it is proved that:
\begin{theorem}[Induction Formula for Marked Point Processes]
	\label{thm:induction}
	Let $\Gamma$ be a countable group.
	For every essentially free pmp action $\alpha:\Gamma\curvearrowright (X,\mu)$, and for every nonempty (simple) marked point process on $\Gamma$ that is a factor of $\alpha$ and has finite intensity, the same induction formula (see~\eqref{eq:induction}) is valid.
\end{theorem}

By this theorem, a low-cost graphing on $G''$ is constructed, as desired. Two proofs of \Cref{thm:induction} are given in \Cref{sec:induction}. The first proof reduces the claim to the induction lemma for countable Borel equivalence relations (the claim for group actions is not sufficient).
To be more self-contained, an elementary proof is also included, which mimics the proof of the original induction lemma.

\subsubsection{The condition of nice growth behavior}
\label{intro:growth}

A delicate challenge in the above proof is ensuring that only \textit{good} horoballs appear in the limiting horoball process. More precisely, we need two properties: The infinite semi-touching property of horoballs, and also we should be able to construct a low-cost graphing inside the horoballs. 
Under the metric $\rho_c$, defined in~\eqref{eq:rho}, the infinite semi-touching property does not hold for all horoballs, but holds for those horoballs that correspond to a pair of \textit{boundary points} of $G$ and $G'$ respectively, which we call \defstyle{horoballs of type II} (\Cref{def:type} and Figure~\ref{fig:type}). 
In addition, the horoballs of type II are precisely those horoballs that have infinite horizontal and vertical sections, as required in \Cref{intro:graphing}.
 
For ensuring that only horoballs of type II appear in the aforementioned Poisson horoball process, 
the following condition should be satisfied (see the proof of \Cref{lem:horoball}), where $v_n$ and $v'_n$ denote the volumes of the balls with radius $n$ in $G$ and $G'$ respectively, and $\oball{n}{o'',\rho_c}$ denotes a ball of radius $n$ under the metric $\rho_c$:
\begin{equation}
	\label{eq:ratio}
	\limsup_n \frac{\card{\oball{n}{o'',\rho_c}}}{\max\{v_n,v'_{\floor{cn}}\}}=\infty.
\end{equation}
Note that if this condition does not hold, then the middle vertical section or the middle horizontal section of a ball would occupy a non-negligible portion of the ball, which shows that the origin would be near a corner of a typical ball that contains it, and this would create horoballs not of type II.
For~\eqref{eq:ratio} to hold, one can see that $c$ should be equal to $\log a/\log a'$, where $a$ and $a'$ are the growth rates of $G$ and $G'$, but we don't know whether this is sufficient:

\begin{problem}
	\label{prob:ratio}
	Does~\eqref{eq:ratio} always hold if $c=\log a/\log a'\in(0,\infty)$?
\end{problem}

As an evidence, we prove:

\begin{proposition}
	\label{prop:growth}
	Assume $a>1$ and $a'>1$, and let $c:=\log a/\log a'$. If for some arithmetic sequences $(r_n)_n$ and $(r'_n)_n$ in $\mathbb N$, the limits $\lim_n v_{r_{n+1}}/v_{r_n}$ and $\lim_n v'_{r'_{n+1}}/v'_{r'_n}$ exist, then~\eqref{eq:ratio} holds.
\end{proposition}
This claim is proved by splitting a $\rho_c$-ball into \textit{vertical cylindrical slices}: 
\begin{equation}
	\label{eq:ball}
	\card{\oball{n}{o'',\rho_{c}}} = 	\sum_{t=0}^{n} s_{(n-t)}  v'_{\floor{c t}},
\end{equation}
where $s_n:=v_n-v_{n-1}$ is the volume of the sphere of radius $n$, and by showing that all of these slices have roughly equal volumes (if $r_n:=n$): 

\begin{proof}[Proof of \Cref{prop:growth}]
	We prove that $\lim_n {\card{\oball{n}{o'',\rho_c}}}/{v_n}=\infty$. We prove this only in the case $r_n=n$, and the claim for general arithmetic sequences can be proved similarly.
	Assume $b:=\lim v_{{n+1}}/v_{n}$ exists. 
	So, $\forall t\in\mathbb Z: \lim_n v_{{n-t}}/v_{n} = b^{-t}$. One can deduce that $b=\lim_n v_{n}^{1/n}=a$. Also, one has $\lim_n s_{(n-t)}/v_n = (1-\frac 1 a) a^{-t}>0$ (since $a>1$) and $v'_{\floor{c t}}\geq (a')^{\floor{c t}}\geq (a')^{-1} a^{t}$. 
	Therefore, $\liminf_n s_{(n-t)}  v'_{\floor{c t}} / v_n \geq (1-\frac 1 a) (a')^{-1}$ for all $t$. Hence, \eqref{eq:ball} implies that $\card{\oball{n}{o'',\rho_{c}}}/v_n$ converges to infinity and the claim is proved.
\end{proof}

The author is not aware of any group known to violate the assumption of \Cref{prop:growth}. However, since groups with oscillating behavior of $v_{n+1}/v_n$ are difficult to control, we give up with \Cref{prob:ratio} and proceed with another way, by modifying the proof from the beginning. 

\subsubsection{Removing the condition by using perturbed balls}
\label{intro:perturb}
To remove the growth condition~\eqref{eq:ratio} (or that of \Cref{prop:growth}), we modify the above proof by replacing $\rho_c$-balls (used in the propagation method) with another shape from the beginning. For this, it is essential to use a suitable notion of convergence. The latter is the convergence of \textit{point processes of pointed marked sets}, defined later in \Cref{subsec:pp}.

To perturb the balls, we change the radius of the vertical slices (see~\eqref{eq:ball}) from $\floor{ct}$ to another value, namely $f(t)$, and call the resulting shape a \textbf{perturbed diamond}. We will choose $f$ suitably, but the challenge is a trade-off between ensuring that the slices have roughly the same volume, and that the \textit{slope} of the boundary of the perturbed diamonds converges to $c$. So, we cannot achieve both if~\eqref{eq:ratio} fails. Instead, the idea is that not all slices need to have the same volume: It is enough that only a few of the slices have volume comparable to $v_n$, and that the number of those \textit{good} slices converges to infinity (possibly very slowly). This way, we can put the {good} slices with sufficiently far distance from each other, and carefully dampen the deviations of the slope from $c$, such that the slope converges to $c$ in the limit (\Cref{lem:linear}). Then, we will show that the perturbed diamonds converge (roughly) to horoballs with slope $c$ (\Cref{lem:perturbedHoroball}), and no bad horoballs appear in the resulting horoball process a.s. (\Cref{lem:perturbedHoroball2,lem:perturbedHoroball3,lem:horoball}). This way, the general finitely generated case of \Cref{thm:fixedpriceGeneral} is proved without any assumptions.

\unwritten{We expect that the $l_2$ metric (which is the metric chosen in~\cite{FMW}) can also be used and yields the same Poisson horoball process, but requiring further assumptions.}

\subsection{Related Works}
\label{intro:relatedWorks}

The paper~\cite{FMW} proves the fixed price 1 property in two settings: Products of the automorphism groups of at least two regular trees, and also higher rank semisimple real Lie groups (these are not discrete groups, but the notions of cost and fixed price are already defined for them; see~\cite{AbMe22}). Their proof leverages the weak subsequential limits of low-intensity Poisson-Voronoi tessellations, called the \textit{ideal Poisson-Voronoi tessellation} (IPVT), which has been introduced and developed recently in~\cite{sandeep19,BuCuPe25cheeger,DCE+23}. They prove the remarkable property that, in the two settings mentioned above, every pair of cells of the IPVT touch each other at infinitely many places. This is proved by using double-recurrence of the action of the group on the \textit{corona space} (which is equivalent to the space of horoballs). Also, they consider a cost 1 graphing inside the cells by proving that a typical cell is amenable. 

By extending the ideas of~\cite{FMW} and the propagation method, the paper~\cite{Me23} proves the fixed price 1 property in a more general setting, and in particular, for another class of products of groups. 

Shortly after the present paper was published on arXiv, L. Bowen and E. Bevilacqua published a similar result in~\cite{BeBo25cost}, under a certain growth condition, which has been prepared independently in parallel to this work. 
They also use a Poisson horoball process, but their proof method is different from ours. They extend the ideas of~\cite{FMW} and prove that a typical horoball  in an \textit{exact} group is amenable 
(by introducing and developing the notions of cost and \textit{limit-amenability} for \textit{infinite-measure-preserving (imp)} actions).
They also show the infinite semi-touching property of horoballs by proving double-recurrence.

The following are the connections and differences of the present work with the ones mentioned above:
\begin{itemize}
	\item The present paper uses the propagation method instead of the IPVT. The connection with~\cite{FMW} is that, when non-perturbed diamonds are used, the resulting Poisson horoball process is equivalent (as an action of $G''$) to the IPVT (by considering the Voronoi diagram of the set of centers of the horoballs). 
	However, an important difference is that the present paper uses a different notion of convergence: In~\cite{FMW,BeBo25cost}, only the convergence of the set of centers of the balls are considered, but in the present work, the convergence is in the sense of random collections of (pointed) subsets of the group (a notion from stochastic geometry described in \Cref{subsec:pp}). This is what allows us to perturb the balls and obtain a different limit if the volume growth condition~\eqref{eq:ratio} fails, although we do not have a counter example for~\eqref{eq:ratio} at this moment. 
		
	\item
	Another difference with~\cite{FMW,BeBo25cost,Me23} is the proofs of the following two claims, in a significantly simpler way than the analogous results in these papers: The construction of a low-cost graphings inside the horoballs, and also the infinite semi-touching property of horoballs. We prove no amenability, mixing or double-recurrence properties. Instead, we provide much simpler direct proofs (the proof of the second claim is inspired from~\cite{FMW}, as discussed in \Cref{intro:graphing}).
	
	\item
	The idea of using overlapping sets instead of tessellations and the induction formula for multisets (\Cref{thm:induction}) seem to be new.
	
	\item
	It should be noted that~\cite{FMW} considers the $l_2$ product metric, but one can show that the resulting horoball process is identical to the one with the metric $\rho_c$ (imagine a large circle converging to a half-plane). We expect this property to hold for general products under only a growth condition.
\end{itemize}

\subsection{The Structure of the Paper}

The basic definitions and properties are provided in \Cref{sec:def}, including the notion of cost, horoballs, and point processes of horoballs. In particular, two types of horoballs on $G''=G\times G'$ are described in \Cref{def:type}. \Cref{sec:diamond} defines perturbed diamonds and the fine tuning of the perturbations. It also provides criteria for the convergence of perturbed diamonds to (slightly perturbed) horoballs, and similar criteria for point processes of perturbed diamonds. The extension of the induction formula to random multisets is done in \Cref{sec:induction} .
Finally, the proof of the theorem is provided in \Cref{sec:proof}.

\section{Definitions and Notation}
\label{sec:def}

\subsection{Notation}
\label{subsec:notation}

We will mostly use boldface symbols like $\bs S, \bs C, \ldots$ to refer to random objects.
The cardinality of a set $A$ is denoted by $\norm{A}$.
If $\rho$ is a metric on a set $M$, $x\in M$ and $r\geq 0$, then $\oball{r}{x}:=\oball{r}{x,\rho}$ denotes the closed ball of radius $r$ in $M$ centered at $x$. If $H$ is an undirected graph (or a set of edges) and $x$ is a vertex of $H$, $\deg(x,H)$ denotes the degree of $x$ in $H$.

As mentioned in \Cref{intro:sketch}, we fix finitely generated groups $G$ and $G'$  with neutral elements $o$ and $o'$ respectively. Let $G'':=G\times G'$ and $o'':=(o,o')$. Equip $G$ and $G'$ with arbitrary Cayley graphs, and let $d$ and $d'$ be the resulting graph-distance metrics. 
Let $v_n:=\card{\oball{n}{o,d}}$ and $v'_n:=\card{\oball{n}{o',d'}}$ denote the volumes (i.e., the number of points) of the balls of radius $n$ in $G$ and $G'$, respectively.
Using the fact $v_{m+n}\leq v_m v_n$, one gets that $v_n^{1/n}$ is non-increasing, and hence, converges to some constant $a\geq 1$, which is called the \defstyle{growth rate} of $G$. Let $a'$ be the growth rate of $G'$. We assume that $a>1$ and $a'>1$ (otherwise, one of the groups is amenable and the claim of \Cref{thm:fixedpriceGeneral} is already known). Also, we always equip $G''$ with the weighted $l_1$ metric $\rho_c$ defined in~\eqref{eq:rho}, where $c=\log a/\log a'$.


Throughout the paper, we use unprimed, primed or double-primed symbols for objects that refer to $G$, $G'$ or $G''$ respectively.

If $M$ is a countable set and $p:M\times M\to [0,1]$ is a symmetric function, the \defstyle{bond percolation} with intensity measure $p$ (on the complete graph) is a random subset $\Phi$ of $M\times M$ defined as follows: Put every unordered pair $\{x,y\}$ in $\Phi$ with probability $p(x,y)$, independently from all other pairs. A pair is called \defstyle{open} if it is in $\Phi$ and \defstyle{closed} otherwise.

We need the notions of vague topology and the Fell topology (see, e.g., \cite{bookScWe08}).
Let $E$ be a locally compact second countable Hausdorff space. The \defstyle{vague topology} is defined on the set of locally finite Borel measures on $E$ as follows: $\mu_n\to\mu$ when $\int fd\mu_n\to\int fd\mu$ for all compactly-supported continuous functions $f:E\to\mathbb R$.
The space of such measures is Polish. In particular, the space of integer-valued measures on $E$ is also Polish. Note that an integer-valued measure on $E$ corresponds to a multi-set in $E$. A \defstyle{point process} in $E$ is a random discrete multi-set in $E$. It is called \defstyle{simple} if it contains no multiple points a.s.

Also, the \defstyle{Fell topology} is defined on the set of closed subsets of $E$, and makes it compact. 
Here, we need the case where $E$ is discrete. In this case, the Fell topology can be defined as follows: $B_n\to B$ when $\forall x\in H: \identity{B_n}(x)\to\identity{B}(x)$.

\subsection{Cost}
\label{subsec:cost}

The notion of cost for countable group actions is a special case of the analogous notion for measured countable Borel equivalence relations (CBERs), but we will try to avoid CBERs in defining cost and use a more probabilistic language.

Let $\Gamma$ be a countable group with neutral element $o$. By convention, all actions in this paper are Borel actions on some complete separable metric space.\footnote{In fact, only the Borel structure of the underlying space is needed, except in weak factors which require (only) the topology as well.}
Consider such an action of $\Gamma$ on a space $E$. If $\mu$ is a Borel measure on $E$, then the action is called \defstyle{probability-measure-preserving (pmp)} if $\mu$ is a probability measure that is preserved under the action of every element of $\Gamma$. 
Examples include:
\begin{itemize}
	\item A \defstyle{stationary random marking} of $\Gamma$ with marks in a Polish space $\Xi$ (or in other words, a stationary stochastic process indexed by $\Gamma$). Equivalently, a probability measure on $\Xi^\Gamma$ that is is invariant under left multiplication by every element of $\Gamma$.
	Special cases include \defstyle{iid markings} of the points of $\Gamma$ (we usually take $\Xi=[0,1]$), and \defstyle{stationary random subsets} of $\Gamma$ (where $\Xi=\{0,1\}$).
	\item A \defstyle{stationary random graph} $\Pi$ on $\Gamma$; i.e., a probability measure on $\{0,1\}^{\Gamma\times\Gamma}$ (which is the distribution of $\Pi$) that is is invariant under left multiplications. In particular, a bond percolation on $\Gamma$ whose intensity measure $p$ satisfies $p(xy,xz)=p(y,z)$, $\forall x,y,z$.
\end{itemize}

A pmp action $\alpha:\Gamma\curvearrowright (E,\mu)$ is called a \defstyle{factor} of another pmp action $\alpha':\Gamma\curvearrowright (E',\mu')$ if there exists a measure-preserving function $\varphi: E'\to E$ (allowing $\varphi$ to be undefined on a $\mu'$-null set) that commutes with the actions of $\Gamma$ (i.e., $\varphi(h x') = h \varphi(x')$ for all $x'\in E$).  Also, $\alpha$ is called a \defstyle{weak factor} of $\alpha'$ if there exists a sequence $(\mu_n)_n$ of $\Gamma$-invariant probability measures on $E$ that converge weakly to $\mu$ such that $\Gamma\curvearrowright (E,\mu_n)$ is a factor of $\alpha'$ for every $n$. 

A pmp action $\alpha:\Gamma\curvearrowright (E,\mu)$ is \defstyle{essentially free} if, for $\mu$-a.e. $x\in E$, the stabilizer $\{h\in \Gamma: hx=x\}$ of $x$ is trivial. 
In this case, the \defstyle{cost} of $\alpha$ is 
\[
	\cost(\alpha):= \inf \frac 1 2 \omid{\deg(o,\Pi)},
\]
 where $\Pi$ is a stationary random graph on $\Gamma$ that is a factor of $\alpha$ and is connected a.s. ($\Pi$ is called a \defstyle{graphing} of $\alpha$). Heuristically, the cost is the \textit{infimal average number of edges per vertex} needed to connect all points of $\Gamma$ as a factor of the action.
The group $\Gamma$ has \defstyle{fixed price} if all essentially free actions of $\Gamma$ have the same cost. 

A useful tool in working with cost is the induction formula (Proposition~II.6 of~\cite{Ga00cost}), which we restate here in the special case of essentially free group actions (the claim also holds for the more general setting of measured countable Borel equivalence relations). Fix an essentially free pmp action $\alpha$ of $\Gamma$. Let $\bs S$ be a stationary random subset of $\Gamma$ that is a factor of $\alpha$ and is nonempty a.s. 
Let $\lambda(\bs S):=\myprob{o\in \bs S}$ denote the \defstyle{intensity} of $\bs S$.
Define the \defstyle{induced cost} of $\bs S$, given $\alpha$, by $\cost_{\alpha }(\bs S):=\inf \frac 1 2 \omidCond{\deg(o,\Pi)}{o\in\Pi}$, where the infimum is over all stationary random graphs $\Pi$ on $\Gamma$, as a factor of $\alpha$, such that $\restrict{\Pi}{\bs S}$ is  connected almost surely. In particular, $\cost_{\alpha }(\Gamma)=\cost(\alpha)$. The \defstyle{induction formula} states that
\begin{equation}
	\label{eq:induction}
	\cost(\alpha)-1 = \lambda(\bs S)\left(\cost_{\alpha}(\bs S)-1\right).
\end{equation}

We will need a version of the induction formula for multisets of $\Gamma$ (\Cref{thm:induction}), which will be provided in \Cref{sec:induction}.

\subsection{Boundary and Horoballs}
\label{subsec:boundary}


In this subsection, we recall the notions of boundary and horoballs from~\cite{Gr81hyperbolic} leveraging the notations of~\cite{DCE+23}. Since we will deal only with graphs, we provide the definitions only for this case, which is simpler to state.

Fix an infinite countable set $H$ and an \defstyle{origin} $o\in H$. Let $d$ be a boundedly-finite metric on $H$; i.e., a metric such that every ball in $H$ is finite (e.g., the graph-distance metric if $H$ is a graph, or the metric $\rho_c$ if $H$ has a product form). For every $x\in H$, consider the shifted distance function $d_x(\cdot):=d(x,\cdot)-d(x,o)$. By identifying every $x\in H$ with the function $d_x$, the \defstyle{horocompactification} $\overline H$ of $H$ is the closure of the set of shifted distance functions in the set of 1-Lipschitz functions on $H$ that vanish on $o$ (under pointwise convergence). The \defstyle{horoboundary} of $H$ is $\partial H:=\overline{H}\setminus H$ and its elements are called \defstyle{horofunctions}. Since $H$ is infinite, $\partial H$ is nonempty.

The following notation of~\cite{DCE+23} is helpful: A \textit{point of $\partial H$} is usually denoted by $\theta$ (or similar symbols) and the corresponding function on $H$ is denoted by $d_{\theta}$. In fact, $d_{\theta}$ and $\theta$ are the same objects, but viewed in two different ways. Note that $d_{\theta}(o)=0$. 

Given $\theta\in\partial H$ and $\delta\in\mathbb R$, the set $HB(\theta,\delta):= \{x\in H: d_{\theta}(x)\leq \delta \}$ is called a \defstyle{horoball} with center $\theta$ and \textbf{delay} $\delta$. The pair $(HB(\theta,\delta),\theta)$ is called a \defstyle{pointed horoball}. 
One might also call $HB(\theta,\infty):=H$ a \defstyle{horoball with infinite delay} and $(H,\theta)$ a pointed horoball with infinite delay.

\unwritten{Consider the case where $d$ is integer-valued (e.g., the graph-distance metric). This implies that, given a sequence of balls $B_{r_n}(x_n)$ in $H$, if $x_n\to \theta\in\partial H$ and $\delta_n:=r_n-d(x_n,o)\to \delta\in\mathbb Z$, then $B_{r_n}(x_n)$ converges to $HB(\theta,\delta)$ in the Fell topology; equivalently, $\forall y\in H: \identity{\oball{r_n}{x_n}}(y)\to \identity{HB(\theta,\delta)}(y)$. 
%
Similarly, if $\delta_n$ converges to $\infty$ (resp. $-\infty$), then $\oball{r_n}{x_n}$ converges to $H$ (resp. $\emptyset$). So, the set of balls and horoballs, together with $H$ and $\emptyset$, is compact. But if $d$ is not integer-valued, the mentioned properties do not hold. Instead, a subsequence of $\oball{r_n}{x_n}$ converges to a set that is sandwiched between $HB(\theta,\delta)$ and the \textit{open horoball} $ \{x\in H: d_{\theta}(x)< \delta \}$.}


For general groups $\Gamma$, we do not have many information about how the boundary and the horoballs look like, because they depend heavily on the geometry of the group (and on the choice of its generating set). It is even not clear if the set of points of a horoball uniquely determines its center (this is why \textit{pointed horoballs} will be considered in the next subsection). 
We also do not know whether every horoball is connected, or whether every point of $\partial \Gamma$ is the limit of a geodesic (or even a path) under the topology of $\bar\Gamma$.

We will only need the following two properties of the horoboundary: \Cref{lem:geodesic} states that the horoballs of a graph-distance metric are \textit{star-like} in some sense. Also, \Cref{lem:product-boundary} describes the horoboundary of $G''=G\times G'$ under the metric $\rho_c$, in terms of those of $G$ and $G'$.

\begin{lemma}
	\label{lem:geodesic}
	If $H$ is a graph equipped with the graph-distance metric, then for every $\theta\in\partial H$ and every $x\in H$, there exists an infinite path $(\gamma_i)_{i\geq 0}$ such that $\gamma_0=x$ and $d_{\theta}(\gamma_i)=d_{\theta}(x)-i$. 
\end{lemma}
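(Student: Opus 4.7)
The plan is to reduce the construction of the path to a one-step \emph{descent lemma}: for every $y \in H$ there exists a neighbor $z$ of $y$ in $H$ with $d_\theta(z) = d_\theta(y) - 1$. Given this, I would construct the path inductively by setting $\gamma_0 := x$ and choosing $\gamma_{i+1}$ to be any neighbor of $\gamma_i$ provided by the descent lemma. By induction, $d_\theta(\gamma_i) = d_\theta(x) - i$, and since the values $d_\theta(\gamma_i)$ are strictly decreasing the vertices are pairwise distinct, so the path is genuinely infinite.

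To prove the descent lemma, fix $y \in H$ and pick a sequence $(x_n)$ in $H$ with $d_{x_n} \to d_\theta$ pointwise, which exists since $\theta \in \partial H$ is the limit of shifted distance functions. First I would argue that $x_n \neq y$ for all sufficiently large $n$: otherwise $d_\theta(y) = \lim_n d_{x_n}(y) = \lim_n (-d(x_n,o))$ along a subsequence, which together with the convergence of $d_{x_n}$ to the finite function $d_\theta$ would force $x_n$ to take the value $y$ along a subsequence with $d(x_n,o)$ bounded; but then $d_\theta = d_y$, giving $\theta = y \in H$, which contradicts $\theta \in \partial H = \overline H \setminus H$.

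For each such $n$ fix a geodesic in $H$ from $y$ to $x_n$, and let $z_n$ be its first vertex after $y$, so that $z_n$ is a neighbor of $y$ and $d(x_n, z_n) = d(x_n, y) - 1$. Because $d$ is the graph-distance metric of a boundedly-finite metric, the graph is locally finite, so $y$ has only finitely many neighbors and along a subsequence the $z_n$ agree with a single vertex $z$. Passing to the limit in
\[
d(x_n, z) - d(x_n, o) = \bigl(d(x_n, y) - d(x_n, o)\bigr) - 1
\]
yields $d_\theta(z) = d_\theta(y) - 1$, which is the descent lemma.

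The only real obstacle is handling the trivial case $x_n = y$ and confirming local finiteness, both of which I have addressed above; the rest is an essentially mechanical induction. I would also note that the same argument works verbatim for any boundedly-finite integer-valued metric $d$ in which every non-self pair of points lies on an ``edge-chain'' realizing the distance, so the proof does not really use more than the 1-Lipschitz structure of $d_\theta$ together with the availability of geodesic first-steps.
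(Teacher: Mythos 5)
Your proof is correct and follows essentially the same approach as the paper's one-line argument: choose $x_n\to\theta$, take geodesics toward $x_n$, and use local finiteness (boundedly-finite graph metric) to pass to a subsequential limit. The only difference is bookkeeping --- the paper takes a subsequential pointwise limit of the entire geodesics $\gamma^{(n)}$ via a diagonal extraction, while you factor out a one-step descent lemma (the first geodesic step stabilizes along a subsequence) and iterate it; this is slightly more elementary, avoiding the diagonal argument, but is the same underlying idea and your checks (ruling out $x_n=y$, local finiteness) are sound.
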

It should be noted that $\gamma$ does not necessarily converge to $\theta$ in $\overline{H}$.
\begin{proof}
	Choose $x_n\in H$ such that $x_n\to \theta$. Let $\gamma^{(n)}$ be a geodesic from $x$ to $x_n$, and take a subsequential pointwise limit of $\gamma^{(n)}$ as $n\to\infty$.
\end{proof}

\begin{figure}[t]
	\begin{center}
		\includegraphics[width=.4\textwidth]{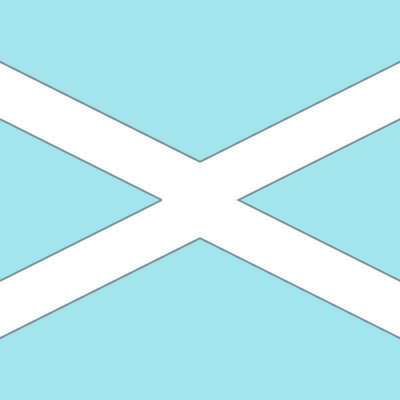}
		\quad\quad
		\includegraphics[width=.4\textwidth]{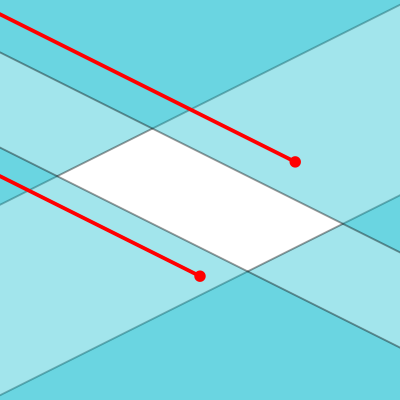}
		\caption{Horoballs in the plane with the metric $\rho_c$ \eqref{eq:rho}, where $c=\frac 12$, described at the end of \Cref{subsec:boundary}. On the left, the horoballs of type I are shown, which are corners in the plane. Horoballs of type II are shown on the right, and are half-planes with slope $\pm c$. Two paths with slope $c$ are also depicted, that are in bounded distance from each other.}
		\label{fig:type}
	\end{center}
\end{figure}


\begin{lemma}[Boundary of Products]
	\label{lem:product-boundary}
	Given the weighted $l_1$ metric $\rho_c$ on $G''$, defined in~\eqref{eq:rho}, one has
	\[
		\overline{G''} \equiv \overline{G}\times \overline{G'}.
	\]
	More specifically, the functions 
	\begin{equation}
		\label{eq:d_theta''}
		d_{(u,u')}(\cdot,\cdot):=d_{u}(\cdot)+d_{u'}(\cdot)/c,
	\end{equation}
	defined for $u\in\overline{G}$ and $u'\in\overline{G'}$, form all points of $\overline{G''}$.
\end{lemma}
\begin{proof}
	It can be seen that, if $(x_n,x'_n)$ is a sequence in $G''$, then it has a subsequence that converges to one of the functions $d_{(\theta,\theta')}$ mentioned in the lemma (consider 4 cases: whether $(x_n)_n$ and $(x'_n)_n$ escape to infinity or not). This implies the claim.
\end{proof}
Based on this lemma, we can define two types of boundary points and horoballs in $G''$:
\begin{definition}[Type of Horoballs]
	\label{def:type}
	A point $(u,u')\in \partial G''$, or a horoball centered at $(u,u')$, is:
	\begin{itemize}
		\item of \defstyle{type I} if either $u\in G$ and $u'\in \partial G'$, or $u\in \partial G$ and $u'\in G'$,
		\item of \defstyle{type II} if $u\in \partial G$ and $u'\in \partial G'$.
	\end{itemize}
\end{definition}

As a schematic example, \Cref{fig:type} shows some horoballs in the plane with the metric $\rho_c$, where $c=\frac 1 2$. Observe that every pair of horoballs of type II have the {infinite semi-touching} property~\eqref{eq:infiniteTouching} (this will be proved for general products in \Cref{lem:infiniteTouching}). For instance, for two of the horoballs in the figure, two paths are shown that are in bounded distance from each other and lie entirely in the two horoballs. Also, in horoballs of type II, all horizontal or vertical sections are infinite. These properties do not hold for horoballs of type I.

\subsection{Point Processes of (Marked) Balls/Horoballs}
\label{subsec:pp}

The notions of \textit{particle processes} and \textit{point processes of closed subsets} are central notions in stochastic geometry; see e.g., Chapter~3 of~\cite{bookScWe08}. We define similar notions in what follows to define point processes of horoballs, marked horoballs, or similar objects.

Let $H$ be an infinite countable set  
equipped with a boundedly-finite metric $d$. 
Let $\mathcal C(H)$ be the space of all pointed sets $(B,\theta)$, where  $B\subseteq H$ is a nonempty subset and $\theta\in\overline{H}$ (note that $\theta$ is not necessarily in $B$). One might equip $\mathcal C(H)$ with the product of the Fell topology and the natural topology of $\overline{H}$.
Noting that the empty set was excluded, one obtains that $\mathcal C(H)$ is a locally compact second countable metrizable space. Also,  one can see that a subset $K$ of $\mathcal C(H)$ is precompact if and only if there exists a finite set $F\subseteq H$ such that $\forall (B,\theta)\in K: B\cap F\neq\emptyset$.

Let $\bs C$ be a point process in $\mathcal C(H)$; i.e., a random discrete (multi-) set in $\mathcal C(H)$. By the previous statement, discreteness of $\bs C$ means that, for every $x\in H$, there are at most finitely many elements $(B,\theta)\in\bs C$ such that $x\in B$.
If $\bs C$ consists of only pointed balls/horoballs a.s., then it is called a \defstyle{point process of pointed balls/horoballs}. 
In fact, we will use \textit{perturbed} horoballs, defined later.

Additionally, we will need \defstyle{pointed marked balls/horoballs}. The latter are tuples of the form $(B,\theta; m)$, where $(B,\theta)$ is a pointed ball/horoball and $m:B\to \Xi$ is a marking of $B$, given some compact metric space $\Xi$, called \textit{the mark space}; e.g., $\Xi=[0,1]$. 
More generally, let $\mathcal C'(H)$ be the space of all tuples $(B,\theta;m)$, where  $B\subseteq H$ is a nonempty subset, $\theta\in\overline{H}$ and $m:B\to \Xi$ is a marking of $B$. In fact, we will only need constant markings, i.e., when $m$ is a constant function on $B$. So, it is safe to replace $\mathcal C'(H)$ with $\mathcal C(H)\times \Xi$. 

Similarly to the last case, it can be seen that $\mathcal C'(H)$ is a locally compact second countable metrizable space. Also, a subset $K$ of $\mathcal C'(H)$ is precompact if and only if there exists a finite set $F\subseteq H$ such that $\forall (B,\theta;m)\in K: B\cap F\neq\emptyset$. 
Let $\bs C'$ be a point process in $\mathcal C'(H)$, and note that discreteness of $\bs C'$ means that, for every $x\in H$, there are at most finitely many elements $(B,\theta;m)\in\bs C'$ such that $x\in B$. 
If $\bs C'$ consists of only pointed marked balls/horoballs a.s., then it is called a \defstyle{point process of pointed marked balls/horoballs}. 

The mentioned characterization of precompact subsets of $\mathcal C(H)$ and $\mathcal C'(H)$ implies the following lemma. The proof is left to the reader. 

\begin{lemma}
	\label{lem:tight}
	A sequence $(\bs C_n)_n$ of point processes in $\mathcal C(H)$ is tight if and only if, for every $x\in H$, the sequence of random variables $\card{\{(B,\theta)\in\bs C_n: x\in B\}}$, $n=1,2,\ldots$, is tight. By the assumption of compactness of the mark space $\Xi$, the same claim also holds for a sequence of point processes $(\bs C'_n)_n$ in $\mathcal C'(H)$.
\end{lemma}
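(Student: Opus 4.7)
The plan is to reduce the claim to the standard tightness criterion for point processes on a locally compact second countable Hausdorff (LCSC) space: a sequence of point processes is tight (in the vague topology on locally finite counting measures) if and only if, for every compact subset $K$ of the ambient space, the total counts on $K$ form a tight sequence of $\mathbb N$-valued random variables. The real work is then just to verify that $\mathcal C(H)$ and $\mathcal C'(H)$ are LCSC and to convert the compact-set condition into the pointwise condition stated in the lemma, using the precompactness characterization already given in the excerpt.

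To verify that $\mathcal C(H)$ is LCSC, Polish-ness is asserted in the excerpt, so only local compactness needs checking. Given $(B,\theta) \in \mathcal C(H)$, pick any $x \in B$, which exists because $B$ is nonempty. Since $H$ is discrete, $\{x\}$ is simultaneously open and compact in $H$; the Fell topology therefore makes
\[
K_x := \{(B',\theta') \in \mathcal C(H) : x \in B'\}
\]
an open set, while the precompactness characterization from the excerpt (applied with $F = \{x\}$) shows that $K_x$ is precompact. Its closure is then a compact neighborhood of $(B,\theta)$, as required.

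With LCSC established, I would prove both implications directly. Noting $\card{\{(B,\theta)\in \bs C_n : x \in B\}} = \bs C_n(K_x)$, tightness of $(\bs C_n)_n$ immediately yields tightness of this variable for each $x$, since $\overline{K_x}$ is compact. For the converse, let $K \subseteq \mathcal C(H)$ be any compact set; the precompactness characterization produces a finite $F \subseteq H$ with $K \subseteq \bigcup_{x \in F} K_x$, so
\[
\bs C_n(K) \;\le\; \sum_{x \in F} \bs C_n(K_x) \;=\; \sum_{x \in F} \card{\{(B,\theta) \in \bs C_n : x \in B\}},
\]
exhibiting $\bs C_n(K)$ as bounded by a finite sum of tight variables, hence itself tight. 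The LCSC criterion then delivers tightness of $(\bs C_n)_n$.

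For the marked version on $\mathcal C'(H)$, compactness of $\Xi$ enters precisely so that the same precompactness characterization (restated in the excerpt) holds: the extra data $m \colon B \to \Xi$ ranges in the compact space $\Xi^B \subseteq \Xi^H$, and so does not create any new obstruction to precompactness beyond the base condition $B \cap F \neq \emptyset$. The whole argument above then transfers verbatim with $K'_x := \{(B',\theta';m') : x \in B'\}$ in place of $K_x$. I do not foresee any serious obstacle, as the hard combinatorial content (the precompactness characterization) is already provided; the only point requiring care is that compactness of $\Xi$ is genuinely needed for local compactness of $\mathcal C'(H)$, which is exactly why the hypothesis on $\Xi$ appears in the statement.
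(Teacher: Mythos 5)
Your proof is correct and follows essentially the approach the paper intends: the paper offers no written proof, only the remark that the lemma follows from the precompactness characterization of subsets of $\mathcal C(H)$ and $\mathcal C'(H)$, and your argument is exactly the expected expansion of that remark via the standard LCSC tightness criterion for point processes, with $K_x$ clopen (hence compact, being closed and precompact) making both directions routine.
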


\section{Perturbed Diamonds Converging to Perturbed Horoballs}
\label{sec:diamond}

In this section, we define perturbed diamonds, which were described heuristically in \Cref{intro:sketch}.
As in \Cref{subsec:notation}, we assume that $G$ and $G'$ are finitely generated, $a>1$ and $a'>1$. We also equip $G''$ with the metric $\rho_c$, where $c=\log a/\log a'$.
\begin{lemma}
	\label{lem:linear}
	There exists an increasing function $f:\mathbb Z^{\geq 0}\to\mathbb Z^{\geq 0}$ and an increasing sequence $(r_j)_{j}$ in $\mathbb Z^{\geq 0}$ such that, by letting $r'_j:=f(r_j)$, one has
	\begin{eqnarray*}
		f(0) &=& 0,\\
		\sup_n v'_{r'_n}/v_{r_n}&<&\infty,\\
		\inf_n v'_{r'_n}/v_{r_n}&>&0,
	\end{eqnarray*}
	and, in addition, $f$ is almost linear with slope $c$ in the sense that
	\begin{equation}
		\label{eq:flinear}
		\forall m: \exists N: \forall n\geq N: \norm{f(n+m)-f(n)-cm}\leq 1.
	\end{equation}
\end{lemma}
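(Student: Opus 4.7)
The plan is to construct $f$ as a piecewise-linear interpolation with integer rounding of an auxiliary pairing $r \mapsto g(r)$ defined so that $v'_{g(r)}/v_r$ is automatically controlled. Set $g(r) := \min\{r' \geq 0 : v'_{r'} \geq v_r\}$; by minimality $v'_{g(r)-1} < v_r \leq v'_{g(r)}$, and since the Cayley ball in $G'$ satisfies $v'_{m+1} \leq v'_1 \cdot v'_m$, this yields $1 \leq v'_{g(r)}/v_r \leq v'_1$. Combining $\log v_r = r\log a + o(r)$ and $\log v'_{g(r)} = g(r)\log a' + o(g(r))$ with this two-sided bound gives $g(r)/r \to \log a / \log a' = c$; note also $g(0) = 0$ since $v'_0 = 1 = v_0$.

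Next, I would choose $r_1 := 0 < r_2 < \cdots$ inductively, taking $r_{j+1}$ so large relative to $r_j$ that the discrete slopes $s_j := (g(r_{j+1}) - g(r_j))/(r_{j+1} - r_j)$ tend to $c$ very rapidly. This is possible because $g(r)/r \to c$: once $g(r_j)$ is fixed, for $r_{j+1}$ much larger we have $s_j \approx g(r_{j+1})/r_{j+1} \approx c$, and by making the ratio $r_{j+1}/r_j$ large enough I can force $|s_j - c|$ to be below any prescribed tolerance $\eta_j \to 0$. Then on each piece $[r_j, r_{j+1})$ I would define $f(n) := g(r_j) + \floor{s_j (n - r_j) + 1/2}$, so that $f(r_j) = g(r_j)$ and $f$ is monotone on each piece; a minor adjustment of finitely many initial values ensures $f(0) = 0$ and global monotonicity. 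The bounded-ratio conditions then follow immediately from $v'_{f(r_j)}/v_{r_j} = v'_{g(r_j)}/v_{r_j} \in [1, v'_1]$.

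For almost-linearity, within a single piece $[r_j, r_{j+1})$ the rounding gives $|f(n+m) - f(n) - s_j m| \leq 1$, whence $|f(n+m) - f(n) - cm| \leq 1 + m|s_j - c|$; the sparsity of $(r_j)$ ensures that for large $n$ the pair $n, n+m$ straddles at most one boundary $r_{j+1}$, in which case an analogous estimate using both $s_j, s_{j+1} \to c$ applies. The main obstacle I anticipate is achieving the sharp bound $\leq 1$ rather than $\leq 1 + \epsilon$: the naive estimate contributes both a rounding error of up to $1$ and a slope deviation of $m|s_j - c|$, so to prevent the sum from exceeding $1$ I would choose $(r_j)$ growing so fast that $|s_j - c|$ vanishes faster than $1/m$ for every fixed $m$---a diagonal choice given the pointwise convergence $g(r)/r \to c$---and refine the rounding at piece boundaries so that the integer error and the slope error cannot simultaneously be near their worst-case values, pushing the total below $1$.
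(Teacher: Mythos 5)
Your proposal is correct and takes a genuinely different route from the paper, so a comparison is in order. The paper constructs $g$ \emph{inductively}: it alternates between pieces of slope $c-\tfrac{c}{n+1}$ and $c+\tfrac{c}{n+1}$, and defines each $r_j$ as the first crossing of $v'_{g(x)}/v_x$ over $1$; this crossing lies in a bounded window because consecutive volumes differ by a bounded factor, which delivers the $\sup/\inf$ bounds, while the alternating slopes converging to $c$ give the almost-linearity. You instead define the matching function $g(r):=\min\{r':v'_{r'}\geq v_r\}$ \emph{explicitly}, which satisfies $v'_{g(r)}/v_r\in[1,v'_1]$ for free and $g(r)/r\to c$, and then choose a sparse sequence $(r_j)$ only to control the secant slopes $s_j\to c$, interpolating with rounding in between. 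Your approach cleanly decouples the bounded-ratio requirement (automatic from the definition of $g$) from the slope-convergence requirement (handled by sparsity), and avoids the alternating crossing construction entirely; the paper's version hard-codes both into the inductive step. Each gives a valid $f$.

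One remark on the point you flag as an obstacle. Your instinct is right that a naive triangle inequality gives $\leq 1+m|s_j-c|$, but the fix you gesture at (``refine the rounding at piece boundaries so the two errors cannot simultaneously be near their worst-case'') is not quite the mechanism. The clean resolution is: with nearest-integer rounding the contribution $f(n+m)-f(n)-s_jm$ lands in the \emph{open} interval $(-1,1)$, because it is a difference of two terms each in a half-open interval of length $1$; combined with $m|s_j-c|\to 0$, the total lies in $(-1-\epsilon,1+\epsilon)$. Since $f(n+m)-f(n)-cm$ takes values in the discrete set $\{k-cm:k\in\mathbb Z\}$, whose members of absolute value exceeding $1$ are bounded away from $1$ (by at least $\min(\{cm\},1-\{cm\})$ if $cm\notin\mathbb Z$, or by $1$ if $cm\in\mathbb Z$), a small enough $\epsilon$ forces $|f(n+m)-f(n)-cm|\leq 1$. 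The same observation is what (implicitly) closes the paper's own final sentence, where $\lim_n g(n+m)-g(n)-cm=0$ is asserted to ``imply the last condition for $f=\lfloor g\rfloor$'': there $\lfloor g(n+m)\rfloor-\lfloor g(n)\rfloor - (g(n+m)-g(n))$ also lies in $(-1,1)$ strictly, and the same discreteness argument applies. So both proofs gloss the same step, and the same resolution works for both.
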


Roughly speaking, the \textit{slope} of $f$ is almost $c$, but note that $f(n)-cn$ might diverge. For instance, one might have $f(n) = c n+\log n + O(1)$.

\begin{proof}
	We first construct a function $g:\mathbb Z^{\geq 0}\to \mathbb R$  and $(r_j)_j$ inductively that satisfy the same conditions, and in addition, $v'_{r'_{2n}}\geq v_{r_{2n}}$ and $v'_{r'_{(2n+1)}}\leq v_{r_{(2n+1)}}$ for all $n$.
	Start from $r_0:=0$ and $g(0):=0$. Assume that $r_0,\ldots,r_{2n}$ and $(g(x))_{x\leq r_{2n}}$ are defined. In particular, one has $v'_{g(r_{2n})}\geq v_{r_{2n}}$. For $r_{(2n+1)}$ that will be specified later, and for $r_{2n}<x\leq r_{(2n+1)}$, define $g$ linearly by 
	\begin{equation}
		\label{eq:f}
		g(x):= g(r_{2n})+ (x-r_{2n}) (c-\frac c {n+1}).
	\end{equation}
	Choose $\epsilon>0$ such that $(a'+\epsilon)^{(c-\frac c {n+1})}<a$. For large enough $k$, one has $v'_k\leq (a'+\epsilon)^k$ and $v_k\geq a^k$.
	Therefore, \eqref{eq:f} implies that $v'_{g(x)}/v_x$ converges exponentially to 0. Then, let $r_{(2n+1)}$ be the first time after $r_{2n}$ such that $v'_{g(x)}/v_x$ becomes less than or equal to 1. This guarantees that $v'_{g(r_{(2n+1)})}<v_{r_{(2n+1)}}$. Similarly, for $r_{(2n+2)}$ that will be specified later, and for $r_{(2n+1)}\leq x\leq r_{(2n+2)}$, define $g$ linearly by
	\begin{equation}
		\label{eq:f2}
		g(x):= g(r_{(2n+1)})+ (x-r_{(2n+1)}) (c+\frac c {n+1}).
	\end{equation}
	Choose $\epsilon>0$ such that $(a')^{(c+\frac c {n+1})}>a+\epsilon$.
	For large enough $k$, one has $v'_k\geq (a')^k$ and $v_k\leq (a+\epsilon)^k$. Therefore, \eqref{eq:f2} implies that $v'_{g(x)}/v_x$ converges exponentially to $\infty$. Then, let $r_{(2n+2)}$ be the first time after $r_{(2n+1)}$ such that $v'_{g(x)}/v_x$ becomes larger than or equal to 1. This guarantees that $v'_{g(r_{(2n+2)})}\geq v_{r_{(2n+2)}}$. So, $g$ and $(r_j)_j$ are constructed inductively.
	
	We now verify the conditions for the function $f:=\floor{g}$.
	If the generators of $G$ and $G'$ have size at most $M$, then $v_k\leq v_{k+1}\leq M v_k$ and $v'_k\leq v'_{k+2 c}\leq M^{2c} v'_k$. This implies that, when we considered the first crossing of $v'_{g(x)}/v_x$ from 1 in the above algorithm, the value will be in $[\frac 1 M, M^{2c}]$. 
	
	Also, note that~\eqref{eq:f} and~\eqref{eq:f2} imply $\forall m: \lim_n g(n+m)-g(n)-cm = 0$. This implies the last condition for $f$ and the proof is completed.
\end{proof}

From now on, we fix the function $f$ and sequences $(r_j)_j$ and $(r'_j)_j$ given by \Cref{lem:linear}.

\begin{definition}
	\label{def:diamond}
	A \defstyle{perturbed diamond} with parameter $n$ and center $x'':=(x,x')\in G''$ is the set
	\[
		D_n(x''):= \bigcup_{t=0}^{r_n} \{(y,y'): d(x,y) = r_n-t, d'(x',y')\leq f(t)\}.
	\]
	Also, a \defstyle{(perfect) diamond} is a ball in $G''$ under the metric $\rho_c$.
\end{definition}

\begin{example}
	If $G=G'$, then one can let $r_n=r'_n=n$ and $f(r)=r$. Otherwise, if the assumption of \Cref{prop:growth} holds, then one can let $f(r)=\floor{cr}$. In these cases, the reader might replace perturbed diamonds with ordinary $\rho_c$-balls allover the paper.
\end{example}

The following lemma states the key property of perturbed diamonds needed for \Cref{thm:fixedpriceGeneral}. Roughly speaking, the lemma says that a large perturbed diamond looks like a large perfect diamond, except maybe near the \textit{corners}. So, the limit of large perturbed diamonds is roughly the same as the limit of large diamonds, if the {corners} escape to infinity.

\begin{lemma}[Limits of Perturbed Diamonds]
	\label{lem:perturbedHoroball}
	Assume $x''_n:=(x_n,x'_n)\in G''$ is a sequence such that $d(x_n,o)\to\infty$ and $d'(x'_n,o')\to \infty$. Then, by passing to a subsequence if necessary, $x''_n$ converges to some $\theta''=(\theta,\theta')\in\partial G\times \partial G'$ and	
	$D_n(x''_n)$ converges (in the Fell topology) to either $\emptyset$, $G''$, or a set which is sandwiched between two $\rho_c$-horoballs of type II with center $\theta''$ and slightly different delays; more precisely, between two horoballs of the form $HB(\theta'',\delta-2/c)$ and $HB(\theta'',\delta+1/c)$. In addition, every subsequential limit of $(D_n(x''_n),x''_n)$ is of this form. 
\end{lemma}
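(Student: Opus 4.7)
The plan is to pass to a subsequence along which $(x_n, x'_n) \to (u, u') \in \overline G \times \overline{G'}$ and the indicators $\mathbb 1_{D_n(x''_n)}$ converge pointwise (equivalent to Fell convergence on the countable discrete set $G''$), then analyze the limit by combining \Cref{lem:product-boundary} with the almost-linearity of $f$ from \eqref{eq:flinear}. By compactness of $\overline G$ and $\overline{G'}$ and a diagonal argument over the countable set $G''$, such a subsequence exists. The hypotheses $d(x_n,o), d'(x'_n,o') \to \infty$ force $u \in \partial G$ and $u' \in \partial G'$, so $(u,u')$ is of type II. Since $d, d'$ are integer-valued and $d_{x_n} \to d_u$, $d_{x'_n} \to d_{u'}$ pointwise, for each fixed $(y,y')$ the identities $d(x_n, y) = d(x_n, o) + d_u(y)$ and $d'(x'_n, y') = d'(x'_n, o') + d_{u'}(y')$ hold for all large $n$. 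Setting $\delta_n := r_n - d(x_n, o)$, membership $(y,y') \in D_n(x''_n)$ is therefore eventually equivalent to
\[
d_u(y) \leq \delta_n \quad \text{and} \quad d_{u'}(y') \leq f(\delta_n - d_u(y)) - d'(x'_n, o'),
\]
and we pass to a further subsequence on which $\delta_n \to \delta \in \mathbb Z \cup \{\pm\infty\}$ (with a finite integer limit attained eventually, since $\delta_n \in \mathbb Z$).

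Two subcases lead quickly to the limit $D = \emptyset$: if $\delta = -\infty$ the first inequality eventually fails, and if $\delta$ is a finite integer then $\delta_n = \delta$ eventually while $d'(x'_n, o') \to \infty$ makes the second inequality fail. So assume $\delta_n \to +\infty$. The first inequality then holds eventually, and applying \eqref{eq:flinear} with the fixed shift $m = -d_u(y)$ gives $|f(\delta_n - d_u(y)) - f(\delta_n) + c\,d_u(y)| \leq 1$ for all large $n$. Setting $\nu_n := f(\delta_n) - d'(x'_n, o')$ and passing to a further subsequence with $\nu_n \to \nu \in \mathbb Z \cup \{\pm\infty\}$, the membership condition becomes
\[
d_{u'}(y') \leq \nu_n - c\, d_u(y) + \epsilon_n(y), \qquad |\epsilon_n(y)| \leq 1.
\]
If $\nu_n \to -\infty$ this fails eventually for every $(y, y')$, giving $D = \emptyset$; if $\nu_n \to +\infty$ it holds eventually for every $(y, y')$, giving $D = G''$.

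In the remaining main case $\nu_n \to \nu \in \mathbb Z$, we have $\nu_n = \nu$ eventually. Dividing by $c$ and using that $d_{(u,u')}(y,y') = d_u(y) + d_{u'}(y')/c$ is the horofunction at $(u,u')$ by \Cref{lem:product-boundary}, membership in $D_n(x''_n)$ becomes
\[
d_{(u,u')}(y, y') \leq \nu/c + \epsilon_n(y)/c, \qquad |\epsilon_n(y)/c| \leq 1/c.
\]
Setting $\delta := \nu/c$ gives the upper inclusion $D \subseteq HB((u,u'), \delta + 1/c)$. Conversely, any $(y,y') \in HB((u,u'), \delta - 2/c)$ satisfies $d_{(u,u')}(y,y') \leq \delta - 2/c < \delta + \epsilon_n(y)/c$ for every admissible error, so $(y,y') \in D_n(x''_n)$ for all large $n$ and thus $(y,y') \in D$. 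This sandwiches $D$ between the two type II horoballs as asserted. The main technical point is carrying the unit-size error from \eqref{eq:flinear} through the division by $c$ and handling the fact that $d_{(u,u')}$ is generally non-integer-valued (when $c$ is irrational): the extra $1/c$ slack in the lower-bound delay is precisely what converts the strict inequality $d_{(u,u')} < \delta - 1/c$ obtained above into the required closed horoball inclusion.
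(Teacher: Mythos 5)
Your proof is correct and takes a genuinely different, arguably cleaner, route than the paper's. The paper's argument first exhibits an almost-boundary witness pair $q''=(q,q')\in C$ and $q''_2=(q,q'_2)\notin C$ with $q'_2$ adjacent to $q'$, anchors the delay at $\delta:=d_{\theta''}(q'')$, and then uses the exact identity $d'(x'_n,q')=f(\alpha_n)$ (with $\alpha_n:=r_n-d(x_n,q)$) to drive the two inclusions $V^-\subseteq C\subseteq V^+$. You instead parameterize the membership condition directly by the integer-valued quantities $\delta_n:=r_n-d(x_n,o)$ and $\nu_n:=f(\delta_n)-d'(x'_n,o')$, pass to subsequences where these converge in $\mathbb Z\cup\{\pm\infty\}$, and read off all four outcomes ($\emptyset$, $\emptyset$, $G''$, sandwich) from the limiting values in a single unified case analysis. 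This buys two things: it sidesteps the step of producing the witness pair (which the paper asserts is ``straightforward'' but does not detail — one must in principle rule out that every column $\{q\}\times G'$ meeting $C$ lies entirely inside $C$), and it automatically identifies the delay $\delta=\nu/c$ without reference to any particular limit point. The arithmetic carrying the unit-size error from \eqref{eq:flinear} through the division by $c$ is the same in both proofs. One minor remark: your lower inclusion in fact yields the slightly stronger $HB((u,u'),\delta-1/c)\subseteq D$, since $\epsilon_n(y)\geq -1$ already gives $\delta+\epsilon_n(y)/c\geq\delta-1/c$; the extra $1/c$ of slack stated in the lemma is simply unused in your argument, which is fine since the lemma's conclusion is weaker.
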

\begin{proof}
	By passing to a subsequence, one may assume that $x''_n$ and $D_n(x''_n)$ are convergent.
	Assume $D_n(x''_n)$ does not converge to $\emptyset$ nor to $G''$. So, one might assume that $x_n\to\theta$, $x'_n\to\theta'$ and $D_n(x''_n)\to C$, for some $\theta\in\partial G$, $\theta'\in\partial G'$ and a nontrivial subset $C\subseteq G''$. 
	By the definition of perturbed diamonds, it is straightforward to find a pair of points $q'':=(q,q')\in C$ and $q''_2:=(q,q'_2)\not\in C$ such that $q'_2$ is adjacent to $q'$. 
	Let $\delta:=d_{\theta''}(q'')$, $V^{+}:=HB(\theta'',\delta + 1/c)$ and $V^{-}:=HB(\theta'',\delta - 2/c)$, where $\theta'':=(\theta,\theta')$. We claim that $V^-\subseteq C\subseteq V^+$, which implies the claim of the lemma.  
	
	The fact $q''\in C$ implies that $d'(x'_n,q')\leq f(r_n-d(x_n,q))$ for large enough $n$, which implies that $\alpha_n:= r_n-d(x_n,q)\to \infty$.
	Also, the assumption $q''_2\not\in C$ implies that $d(x'_n,q')+1\geq d(x'_n,q'_2)> f(\alpha_n)$. So, $d(x'_n,q')=f(\alpha_n)$. 
	
	We now prove that $V^-\subseteq C$. Let $y'':=(y,y')\in V^-$ and $\beta_n:=r_n-d(x_n,y)$. The fact $y''\in V^-$ gives that $d_{\theta''}(y'')\leq d_{\theta''}(q'')-2/c$. Hence, for large enough $n$, one has $d_{x''_n}(y'')\leq d_{x''_n}(q'')-1/c$. So,
	\begin{eqnarray*}
		d(x_n,y) + d'(x'_n,y')/c &\leq& d(x_n,q) + d'(x'_n,q')/c -1/c.\\
		\Rightarrow d'(x'_n,y') &\leq& c(\beta_n-\alpha_n) + f(\alpha_n)-1.
	\end{eqnarray*}
	Note that $\alpha_n\to\infty$ and $\beta_n-\alpha_n$ is bounded. Therefore, \eqref{eq:flinear} implies that, for large enough $n$, $d'(x'_n,y') \leq f(\beta_n)$. Thus, $y''\in D_n(x''_n)$. Since this holds for large enough $n$, one obtains that $y''\in C$. So, it is proved that $V^-\subseteq C$.

	We now prove that $C\subseteq V^+$. Let $z''=(z,z')\in C$. So, for large enough $n$, one has $z''\in D_n(x''_n)$; i.e., $d'(x'_n,z')\leq f(\gamma_n)$, where $\gamma_n:=r_n-d(x_n,z)$. So,
	\begin{eqnarray*}
		d_{x''_n}(z'') &=& d(x_n,z) + d'(x'_n,z')/c\\
		&\leq& d(x_n,z) + f(\gamma_n)/c\\
		&=& d(x_n,q)+\alpha_n-\gamma_n+f(\gamma_n)/c\\
		&\leq& d(x_n,q)+f(\alpha_n)/c + 1/c\\
		&=& d(x_n,q)+d(x'_n,q')/c + 1/c\\
		&=& d_{x''_n}(q'') + 1/c,
	\end{eqnarray*}
	where the last inequality holds for large enough $n$ by~\eqref{eq:flinear} (noting that $\gamma_n\to\infty$ and $\alpha_n-\gamma_n$ is bounded). By letting $n\to\infty$, one obtains that $d_{\theta''}(z'')\leq d_{\theta''}(q'')+1/c$; i.e., $z''\in V^+$. So, the claim is proved.
\end{proof}

\begin{definition}
	\label{def:perturbedHoroball}
	A \defstyle{perturbed pointed horoball of type II} is a pointed set $(B,\theta'')$ that is a limit of pointed perturbed diamonds $(D_n(x''_n),x''_n)$ that satisfy the assumptions of \Cref{lem:perturbedHoroball}. In particular, $\theta''\in\partial G\times \partial G'$ and $B$ is sandwiched between two horoballs of the form $HB(\theta'',\delta-2/c)$ and $HB(\theta'',\delta+1/c)$. The whole pointed space $(G'',\theta'')$, pointed at an arbitrary $\theta''\in\partial G\times \partial G'$, is also considered as a perturbed horoball (with infinite delay).
\end{definition}

We obtain the following corollaries of the above lemma for convergence of point processes of perturbed diamonds. To state the lemmas, given $T<\infty$, let $A_{n,T}$ be the set of pointed perturbed diamonds with parameter $n$ whose center $(x,x')$ satisfies either of the following:
\begin{eqnarray*}
	&&d(o,x)< r_n+T\ \text{ and } \ d'(o',x')< T,\\
	\text{or }&& d'(o',x')< r'_n+T\ \text{ and }\ d(o,x)<T.
\end{eqnarray*}
Roughly speaking, these conditions mean that the diamond is close to a (perturbed) horoball of type I.


\begin{lemma}
	\label{lem:perturbedHoroball2}
	For every $n$, let $C_n\subseteq\mathcal C(G'')$ be a discrete set of pointed perturbed diamonds with parameter $n$. Consider the set $A_{n,T}$ 
	defined before the lemma. Assume that:
	\begin{eqnarray*}
		\forall T: \lim_n \card{C_n \cap A_{n,T}} &=& 0.
	\end{eqnarray*}
%
%
%
	Then every subsequential limit of $(C_n)_n$ in $\mathcal C(G'')$ constitutes only of perturbed horoballs of type II (possibly with infinite delay).
\end{lemma}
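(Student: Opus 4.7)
The plan is to reduce the statement to the single-diamond convergence result of \Cref{lem:perturbedHoroball}. Suppose that along a subsequence $C_{n_k}\to C$ in $\mathcal C(G'')$, and let $(B,\theta'')$ be an arbitrary element of $C$. By the standard characterization of convergence of discrete point configurations in a Polish space, one can select pointed perturbed diamonds $(D_{n_k}(x''_{n_k}),x''_{n_k})\in C_{n_k}$ for large $k$ converging to $(B,\theta'')$ in $\mathcal C(G'')$. Write $x''_{n_k}=(x_{n_k},x'_{n_k})$.

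The first step is to extract from this convergence uniform bounds on the centers. Since $B$ is nonempty (by the definition of $\mathcal C(G'')$), the precompactness criterion recalled in \Cref{subsec:pp} provides a radius $R$ such that $D_{n_k}(x''_{n_k})\cap \oball{R}{o'',\rho_c}\neq\emptyset$ for all large $k$. Picking $y''=(y,y')$ in this intersection and invoking the definition of a perturbed diamond yields some $t\in[0,r_{n_k}]$ with $d(x_{n_k},y)=r_{n_k}-t$ and $d'(x'_{n_k},y')\leq f(t)\leq f(r_{n_k})=r'_{n_k}$. Combined with $d(o,y)\leq R$ and $d'(o',y')\leq cR$, the triangle inequality yields $d(o,x_{n_k})\leq r_{n_k}+R$ and $d'(o',x'_{n_k})\leq r'_{n_k}+cR$.

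The second step is to rule out the case that a coordinate of the center stays close to the origin. If $d(o,x_{n_k})<T_0$ for infinitely many $k$ and some constant $T_0$, then along that subsubsequence the pointed diamond satisfies the second defining condition of $A_{n_k,T}$ with $T:=\max\{T_0,cR\}$, contradicting the hypothesis $\card{C_{n_k}\cap A_{n_k,T}}\to 0$. The symmetric argument forces $d'(o',x'_{n_k})\to\infty$. Therefore, after a further passage to a subsequence, \Cref{lem:perturbedHoroball} applies: the set $B$ is either $G''$ (allowed as a horoball with infinite delay, with $\theta''$ automatically in $\partial G\times \partial G'$ by \Cref{lem:product-boundary}) or sandwiched between two horoballs of type II with center $\theta''\in\partial G\times\partial G'$; both alternatives qualify as perturbed horoballs of type II by \Cref{def:perturbedHoroball}. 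I expect no genuine obstacle here; the only subtlety is the careful translation between precompactness in $\mathcal C(G'')$ and the explicit coordinate bounds that enter the definition of $A_{n,T}$.
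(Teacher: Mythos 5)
Your proof is correct and takes essentially the same approach as the paper's: extract a converging sequence of pointed diamonds from $C_{n_k}$, use nonemptiness of the limit $B$ to bound the centers, invoke the $A_{n,T}$ hypothesis to force $d(o,x_{n_k})\to\infty$ and $d'(o',x'_{n_k})\to\infty$, and then apply \Cref{lem:perturbedHoroball}. The only cosmetic difference is that you spell out the triangle-inequality upper bounds explicitly and argue by contradiction, whereas the paper negates the $A_{n,T}$ membership condition directly; these are the same argument in slightly different packaging.
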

\begin{proof}
	Assume $C_n\to C$ and let $(B,\theta'')$ be an element of $C$. The definition of $\mathcal C(G'')$ implies that there exists a sequence $x''_n=(x_n,x'_n)$ in $G''$ such that $(D_n(x''_n),x''_n)\in C_n$, $D_n(x''_n)\to B$ and $x''_n\to\theta''$. 	
	Given $T<\infty$, the assumption on $A_{n,T}$ implies that, for large enough $n$, 
	\begin{eqnarray*}
		&&d(o,x_n)\geq r_n+T\ \text{ or } \ d'(o',x'_n)\geq T,\\
		\text{and }&& d'(o',x'_n)\geq r'_n+T\ \text{ or }\ d(o,x_n)\geq T.
	\end{eqnarray*}
	If one of the left inequalities happens, then $D_n(x'')$ is far from $o''$. This is impossible for large enough $T$ (since $B\neq\emptyset$ by the definition of $\mathcal C(G'')$). Thus, 
	$d(o,x_n)\geq T$ and $d'(o',x'_n)\geq T$. This proves that $d(o,x_n)\to\infty$ and $d'(o',x'_n)\to\infty$. So, \Cref{lem:perturbedHoroball} implies that $V$ is a perturbed horoball of type II, possibly with infinite delay. So, the claim is proved. 
\end{proof}

\begin{lemma}
	\label{lem:perturbedHoroball3}
	For every $n$, let $\bs C_n$ be a point process in $\mathcal C(G'')$ which constitutes only of perturbed diamonds with parameter $n$. Consider the set $A_{n,T}$ 
	defined before \Cref{lem:perturbedHoroball2}. Assume that:
	\begin{eqnarray*}
		\forall T: \lim_n \myprob{C_n\cap A_{n,T}\neq\emptyset} &=& 0.
	\end{eqnarray*}
	%
	%
	%
	Then, every subsequential limit of $(\bs C_n)_n$ in $\mathcal C(G'')$ constitutes only of perturbed horoballs of type II (possibly with infinite delay) a.s.
\end{lemma}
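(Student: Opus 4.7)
The plan is to reduce to the deterministic statement \Cref{lem:perturbedHoroball2} by a Skorokhod-type argument combined with a diagonal subsequence extraction and Borel--Cantelli, so that the hypothesis ``$\bs C_n \cap A_{n,T} \neq \emptyset$ with probability $o(1)$'' can be upgraded to ``$\bs C_n \cap A_{n,T} = \emptyset$ eventually almost surely, for every $T$ simultaneously.''

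First I would fix a subsequence $(n_k)_k$ along which $\bs C_{n_k}$ converges in distribution to some random element $\bs C$ of $\mathcal C(G'')$. Since $\mathcal C(G'')$ is Polish, so is the space of discrete subsets of $\mathcal C(G'')$ (equipped with the topology of vague/Fell convergence of counting measures), and hence Skorokhod's representation theorem applies: I may assume, after replacing the underlying probability space, that $\bs C_{n_k} \to \bs C$ almost surely.

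The next step is the diagonal extraction. For each fixed positive integer $T$, the hypothesis gives $\myprob{\bs C_{n_k} \cap A_{n_k,T} \neq \emptyset} \to 0$, so by passing to a further subsequence I may arrange that this sequence of probabilities is summable; Borel--Cantelli then gives $\bs C_{n_k} \cap A_{n_k,T} = \emptyset$ for all but finitely many $k$, almost surely. Iterating this for $T=1,2,3,\ldots$ and taking a diagonal subsequence $\tilde n_k$, I obtain a single subsequence along which, almost surely, for every fixed $T$, $\bs C_{\tilde n_k} \cap A_{\tilde n_k,T}=\emptyset$ eventually in $k$; here I use that $A_{n,T}$ is monotone increasing in $T$, so the relation ``holds for $T=m$ eventually'' implies it holds for all $T\leq m$ eventually. (This subsequence still converges almost surely to $\bs C$ in $\mathcal C(G'')$, so no generality is lost.)

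Finally, on the almost-sure event where both properties hold, the deterministic sets $C_k:=\bs C_{\tilde n_k}(\omega)$ satisfy the hypotheses of \Cref{lem:perturbedHoroball2}, so any subsequential limit of $C_k$ in $\mathcal C(G'')$---in particular $\bs C(\omega)$, since $C_k\to\bs C(\omega)$---consists only of perturbed horoballs of type II (possibly with infinite delay). This yields the conclusion. I expect the only mildly delicate point to be the diagonal/Borel--Cantelli bookkeeping to handle all $T$ simultaneously; everything else is a direct transcription of the deterministic lemma into the almost-sure setting.
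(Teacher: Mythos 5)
Your proof is correct and follows essentially the same route as the paper's: Skorokhod coupling to pass from convergence in distribution to almost-sure convergence, followed by a reduction to the deterministic Lemma~\ref{lem:perturbedHoroball2}. The only difference is bookkeeping: the paper invokes a Fatou-type argument to conclude that, for each $T$, a.s.\ infinitely many of the events $\bs C_n\cap A_{n,T}=\emptyset$ occur and then passes to a realization-dependent subsequence, while you extract one deterministic subsequence via summability plus Borel--Cantelli and a diagonal argument. Your version is somewhat more explicit about the ``all $T$ simultaneously'' step, which the paper glosses over, but both treatments are sound and rest on the same two ingredients.
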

\begin{proof}
	The claim is implied by \Cref{lem:perturbedHoroball2} and Skorokhod's representation theorem. More precisely, by the latter, we may choose a coupling of $\bs C_1,\bs C_2,\ldots$ such that $\bs C_n\to \bs C$ a.s. The assumption implies that, given any $T$, the probability of the event that only finitely many of the events $C_n\cap A_{n,T}=\emptyset$ occur, is zero. So, almost surely, the first condition of \Cref{lem:perturbedHoroball2} is satisfied by possibly passing to a subsequence (the subsequence may depend on the realization of $\bs C, \bs C_1,\bs C_2,\ldots$). Therefore, \Cref{lem:perturbedHoroball} implies that $\bs C$ contains only perturbed horoballs of type II (possibly with infinite delay), a.s.
\end{proof}

\section{Induction Formula for Marked Point Processes (with Multiple Points)}
\label{sec:induction}

Let $\Gamma$ be a countable group equipped with a boundedly-finite left-invariant metric $d$. Let $o$ be the neutral element of $\Gamma$. A \textit{multi-set} in $\Gamma$ is an unordered collection of points of $\Gamma$ where each point can appear more than once. In this section, we need to distinguish between multiple points, for reasons similar to the requirement of being essentially free for the action in the induction formula~\eqref{eq:induction}. To achieve this, we use the notion of \textit{marked point processes} from stochastic geometry (see e.g., Chapter~3 of~\cite{bookScWe08}), recalled below.

\begin{definition}[Marked Subsets]
	\label{def:multiset}
	Fix a compact metric space $\Xi$, which is considered as the \textit{mark space} here.
	A \defstyle{discrete marked subset} of $\Gamma$, or for short, a \defstyle{marked subset} of $\Gamma$, is a discrete subset of $\Gamma\times\Xi$. For $x\in \Gamma$, the \defstyle{multiplicity} of $x$ in $S$ is
	\[
		m_S(x):=\card{S\cap(\{x\}\times \Xi)},	
	\]
	which is finite by compactness of $\Xi$. Let $\m(\Gamma)$ denote the set of discrete marked subsets of $\Gamma$.
\end{definition}

By regarding $\m(\Gamma)$ as a subset of the set of integer-valued measures on $\Gamma\times \Xi$, one obtains that $\m(\Gamma)$ is a Borel subset of some Polish space. Note that $\Gamma$ acts on $\m(\Gamma)$ by $h\cdot S:= \{(hx,\xi): (x,\xi)\in S\}$.

\begin{definition}[Stationary Marked Point Process]
	\label{def:markedPP}
	If $\bs S$ is a random element of $\m(\Gamma)$ whose distribution is invariant under the action of $\Gamma$, then $\bs S$ is called a \defstyle{(simple) stationary marked point process} on $\Gamma$. The \defstyle{intensity} of $\bs S$ is $\lambda(\bs S):=\omid{m_{\bs S}(o)}=\omid{\card{\bs S\cap(\{o\}\times\Xi)}}$.
\end{definition}

It should be noted that the term \textit{simple} in this definition means that every {marked point} $(x,\xi)$ appears with multiplicity 1 in $\bs S$. But points $x\in \Gamma$ may appear with more than one mark $\xi\in \Xi$; i.e., one might have $m_{\bs S}(x)>1$.

Given a pmp action $\alpha:\Gamma\curvearrowright (X,\mu)$ of $\Gamma$, one can define a \defstyle{factor marked point process} of $\alpha$ as in \Cref{subsec:cost}; i.e., a measurable function which assigns a discrete marked subset of $\Gamma$ to every element of $X$, in a way that commutes with the actions of $\Gamma$. Since $\mu$ is invariant under the action of $\alpha$, the resulting marked point process is automatically stationary.

Next, we define graphings of marked point processes. We stress that the multiple points are considered as distinct vertices which can be connected to each other, or to other vertices, by edges. This is modeled as follows.

\begin{definition}[Marked Graph]
	\label{def:multiset-graph}
	A \defstyle{(vertex-) marked graph} on $\Gamma$ is a tuple $(S,E)$, where $S\subseteq \Gamma\times \Xi$ is a discrete marked subset and $E$ is a set of undirected edges on $S$ (recall that a point of $\Gamma$ may represent more than one vertex). Let $\mg(\Gamma)$ denote the set of marked graphs on $\Gamma$.
\end{definition}

Every marked graph can be regarded as a discrete subset of $(\Gamma\times \Xi)^2$. So, one can show that $\mg(\Gamma)$ is also a Borel subset of some Polish space. Note that $\Gamma$ acts on $\mg(\Gamma)$ similarly to that of $\m(\Gamma)$.

\begin{definition}[Stationary Random Marked Graph]
	\label{def:multiset-graph-stationary}
	A random element $\bs g:=(\bs S, \bs E)$ of $\mathrm{MG}(\Gamma)$, whose distribution is invariant under the action of $\Gamma$, is called a \defstyle{stationary random marked graph}. Assuming that $0<\lambda(\bs S)<\infty$ (see \Cref{def:markedPP}), the \defstyle{cost} of $\bs g$ is defined by
	\[
	\cost(\bs g):= \frac{1}{2\lambda(\bs S)} \omid{\sum_{o'\in \bs S\cap (\{o\}\times \Xi)}\deg(o',\bs E)}.
	\]
\end{definition}
	\label{rem:multiset-Palm}
	The cost of $\bs g$ is the half expected degree of a \textit{typical vertex} of $\bs S$. This can be formalized by the \textit{Palm distribution}, which is an important notion of stochastic geometry, defined as follows: 
	
	\begin{definition}
		\label{def:palm}
		Assuming $0<\lambda(\bs S)<\infty$,
		the \defstyle{Palm distribution} of $\bs g$ is the probability measure ${\mathbb P_0}$ on $\mg(\Gamma)\times (\Gamma\times \Xi)$ obtained by biasing the distribution of $\bs g$ by $m_{\bs S}(o)$, and then, choosing $\bs o'\in \bs S\cap(\{o\}\times \Xi)$ randomly and uniformly as the root; i.e.,
		\begin{eqnarray*}
			\probPalm{0}{(\bs S, \bs E, \bs o')\in A} := \frac{1}{\lambda(\bs S)}\omid{\sum_{o'\in \bs S\cap (\{o\}\times \Xi)} \identity{A}(\bs S, \bs E, o')},
		\end{eqnarray*}
		for every Borel $A\subseteq \mathrm{MG}(\Gamma)\times (\Gamma\times \Xi)$. One can check that the Palm distribution satisfies the following \defstyle{mass transport principle}: For every Borel function $f:\mg(\Gamma)\times (\Gamma\times \Xi)^2\to\mathbb R^{\geq 0}$, one has
		\begin{eqnarray}
			\label{eq:mtp}
			\omidPalm{0}{\sum_{o''\in \bs S} f(\bs S, \bs E, \bs o', o'')} = \omidPalm{0}{\sum_{o''\in \bs S} f(\bs S, \bs E, o'', \bs o')}.
		\end{eqnarray}
	\end{definition}
	Using the Palm distribution, one can rewrite the cost of $\bs g$ by
	\[
	\cost(\bs g) = \frac 12 \omidPalm{0}{\deg(\bs o')}.
	\]
	\begin{remark}
		The mass transport principle implies that, if $(\bs S, \bs E)$ is connected a.s., then $(\bs S, \bs E, \bs o')$ is a \textit{unimodular} marked graph \cite{processes} under the distribution $\mathbb P_0$.  Similarly, in the disconnected case, the connected component containing $\bs o'$ is unimodular.
	\end{remark}
	

Using the above definitions, one can define the cost of a stationary marked point process $\bs S$ by considering the infimum cost of connected marked graphs with vertex set $\bs S$. But, for the induction formula, we are interested in the more general case where $\bs S$ is a factor of another action $\alpha:\Gamma\curvearrowright(X,\mu)$. 
For instance, in the proof of \Cref{thm:fixedpriceFG}, $\bs S$ is the union of \textit{pointed marked perturbed horoballs}, and we will use the structure of the horoballs and their centers (not just the union of sets) to define a graph.

In the general case, roughly speaking, to each element $x\in X$, one needs to assign a marked graph on $\Gamma$ such that its vertex set is identical to the given marked subset corresponding to $x$. 
More precisely:

\begin{definition}[Induced Cost of Factor Marked Point Processes]
	Let $\alpha:\Gamma\curvearrowright (X,\mu)$ be a pmp action of $\Gamma$ and $\bs S$ be a (simple) marked point process as a factor of $\alpha$. 
	A \defstyle{factor graphing} of $\bs S$ (given that $\bs S$ is also a factor of $\alpha$) is a factor marked graph of $\alpha$ whose vertex set is $\bs S$; i.e., a measurable $\Gamma$-equivariant function from $X$ to $\mathrm{MG}(\Gamma)$ such that the following diagram is commutative:
	\[ 
	\begin{tikzcd}
		& \mathrm{MG}(\Gamma)\arrow{d}\\
		X\arrow{ur}\arrow{r}&\m(\Gamma).
	\end{tikzcd}
	\]
	The \defstyle{induced cost} of $\bs S$, given $\alpha$, is the infimum cost of all factor graphings of $\bs S$ which are connected a.s., and is denoted by $\cost_{\alpha}(\bs S)$.
\end{definition}

In this definition, note that the pushforward of $\mu$ onto $\mathrm{MG}(\Gamma)$ defines a stationary random marked graph, and hence, the cost of the latter is defined by \Cref{def:multiset-graph-stationary}.

We are now ready to prove \Cref{thm:induction}. We provide two proofs. The first one reduces the claim to that of CBERs using the Palm distribution (\Cref{def:palm}). For clarity, a second proof is also provided, which is in fact a translation of the ingredients of the first proof without using CBERs. The reader might safely skip the first proof.

\begin{proof}[First proof of \Cref{thm:induction}]
	%
	Let $\bs S$ be a factor simple marked point process.
	Let $\bar{\bs S}$ be the projection of $\bs S$ on $\Gamma$ (removing multiplicities). By~\eqref{eq:induction}, the claim holds for $\bar{\bs S}$. We deduce the claim for $\bs S$ as follows.
	
	By an abuse of notation, for $x\in X$, we denote by $S(x)$ the marked subset corresponding to $x$ (i.e., we let $S:X\to\m(\Gamma)$ be a the factor map that defines $\bs S$). Let $Y:=Y(S):=\{(x,\xi)\in X\times \Xi: (o,\xi)\in S(x)\}$. In other words, $Y$ describes all choices of $x\in X$ and a point on the fiber $S(x)\cap(\{o\}\times\Xi)$. Let $R$ be the equivalence relation on $Y$ defined as follows: For all $(x,\xi)\in Y$, $h\in \Gamma$ and $\xi'\in\Xi$, if $(h,\xi')\in S(x)$, then we say $(x,\xi)R(h^{-1}x,\xi')$. Note that $(o,\xi')\in S(h^{-1}x)$, which guarantees that $(h^{-1}x,\xi')\in Y$. Note also that if the stabilizer of $x$ is trivial, then $S(x)$ is in bijection with the $R$-equivalence class of $(x,\xi)$ under the map $(h,\xi')\mapsto (h^{-1}x,\xi')$.
	Palm 
	theory defines a probability measure $\mu_0$ on $Y$ as follows: 
	\[
		\mu_0(A):=\frac 1{\lambda(\bs S)}\int_X \sum_{\xi: (o,\xi)\in S(x)} \identity{A}(x,\xi)d\mu(x),
	\]
	for Borel sets $A\subseteq Y$ (compare $\mu_0$ and $\mathbb P_0$ defined in \Cref{rem:multiset-Palm}).
	It can be seen that $R$ is a CBER and $\mu_0$ is an invariant probability measure for $R$.
	
	Now, we construct a bijection between graphings of $(Y,R,\mu_0)$ and factor graphings of $\bs S$.
	Assume $D$ is a graphing of $(Y,R,\mu_0)$; i.e., a symmetric Borel subset of $R$. Given $x\in X$, define a graph with vertex set $S(x)$ as follows: put an edge between $(h_1,\xi'_1)\in S(x)$ and $(h_2,\xi'_2)\in S(x)$ when $((h_1^{-1}x,\xi'_1),(h_2^{-1}x,\xi'_2))\in D$. This is a factor graphing of $\bs S$ (proving the measurability of this construction is omitted for brevity). 
	Also, by the definition of the Palm distributions, one can show that the cost of these two graphings are equal. 
	If the stabilizer of $x$ is trivial, then the last construction is invertible. This shows that graphings of $(Y,R,\mu_0)$ are in bijection with factor graphings of $\bs S$ and $\cost_{\alpha}(\bs S) = \cost(Y,R,\mu_0)$.
	
	Since $\Xi$ is compact, one finds a Borel bijection $\iota: \Xi\to [0,1]$. This allows one to choose \textit{the smallest} point in every fiber as follows: $Y':=\{(x,\xi)\in Y: \forall \xi': (x,\xi')\in Y\Rightarrow \iota(\xi')\geq \iota(\xi)\}$. The induction formula for CBERs (Proposition~II.6 of~\cite{Ga00cost}) provides an equation between $\cost(Y,R,\mu_0)$ and $\cost(Y',R',\mu'_0)$, where the latter is the induced measured CBER on $Y'$. If $\bar S(x)\subseteq \Gamma$ denotes the projection of $S(x)$ on $\Gamma$ (removing multiplicities of the points), then there is a natural bijection between $Y'$ and $Y(\bar S(\cdot))$. In addition, this bijection provides an isomorphism of $(Y',R',\mu'_0)$ and the corresponding CBER on $Y(\bar S)$. By this fact and the mentioned induction formula between $Y'$ and $Y$, one deduces that 
	\[
		\lambda(\bs S)(\cost_{\alpha}(\bs S)-1)=\lambda(\bar{\bs S})(\cost_{\alpha}(\bar{\bs S})-1).
	\]
	The right hand side is equal to $\cost(\alpha)-1$ by~\eqref{eq:induction}. So, the claim is proved.
\end{proof}

\begin{proof}[Second Proof of \Cref{thm:induction}]
	We provide an alternate proof of \Cref{thm:induction} without relying on CBERs. This is in fact the combination of the first proof and a translation of the proof of the induction lemma. 
	
	Let $\pi:\Gamma\times\Xi\to \Gamma$ denote the projection and $\bar{\bs S}:=\pi(\bs S)$ (by removing multiplicities). By~\eqref{eq:induction}, the claim holds for $\bar{\bs S}$. We deduce the claim for $\bs S$ by showing that 
	\begin{equation}
		\label{eq:induction2}
		\lambda(\bs S)(\cost_{\alpha}(\bs S)-1)=\lambda(\bar{\bs S})(\cost_{\alpha}(\bar{\bs S})-1).
	\end{equation}
	
	In what follows, we let $x\in X$ be arbitrary and consider the corresponding sets $S\subseteq \Gamma\times\Xi$ and $\bar S:=\pi(S)$ (given the factor map that constructs $\bs S$ as a function of $x$). In the next constructions, to break the ties, we will use the new marking $u$ defined as follows. Let $\kappa:X\times\Xi \to [0,1]$ be a Borel injective function. 
	For every $y'=(y,\xi)\in S$, let $u(y'):=u_x(y'):=\kappa(y^{-1}x,\xi)$. Note that this marking depends on $x$ as well, and is equivariant under the action of $\Gamma$. The same applies to all of the next constructions. If the stabilizer of $x$ is trivial, then the values of $u$ on all points of $S$ are distinct. This allows one to break the ties in the next constructions. From now on, we assume that $x$ is chosen such that it has trivial stabilizer.
	
	Let $S'\subseteq S$ be the set of $(y,\xi)\in S$ that have the smallest value of $u$ among the points of the fiber $S\cap (\{y\}\times \Xi)$. So, $\pi$ maps $S'$ bijectively onto $\bar S$. We are now ready to prove the $\leq$ and $\geq$ inequalities for the two sides of~\eqref{eq:induction2}.
	
	($\leq$). Assume an edge set $E_1$ is given on $\bar S$ as a function of $x$, such that $x\mapsto (\bar S, E_1)$ is a factor graphing (of $\bar{\bs S}$) and is connected (for $\mu$-a.e. $x$). We construct another set of edges with vertex set $S$ as follows. For every edge $(y_1,y_2)\in E_1$, add its inverse image under $\restrict{\pi}{S'}$. Then, for every $y'=(y,\xi)\in S'$, connect $y'$ to each of the other points of the fiber $S\cap(\{y\}\times \Xi)$ by an edge. Let $E_2$ be the resulting set of edges (proving the measurability of the constructions is omitted for brevity). It is clear that, if $E_1$ is connected, then so is $E_2$. We now compare their costs. 
	By calculating the sum in the next equation, one obtains (here, bold symbols like $\bs E_1, \bs E_2,\ldots$ are used when $x$ is chosen randomly with distribution $\mu$):
	\begin{eqnarray*}
		2\lambda(\bs S)\cost(\bs E_2) &=& \omid{\sum_{o'\in \bs S\cap (\{o\}\times \Xi)}\deg(o',\bs E_2)}\\
		&=& \omid{2\times \identity{\bar{\bs S}}(o)(m_{\bs S}(o)-1) + \deg(o,\bs E_1)}\\
		&=& \omid{2(m_{\bs S}(o)-m_{\bar{\bs S}}(o)) + \deg(o,\bs E_1)}\\
		&=& 2\lambda(\bs S)-2\lambda(\bar{\bs S}) + 2\lambda(\bar{\bs S})\cost(\bs E_1).
	\end{eqnarray*}
	This proves that $\lambda(\bs S)(\cost_{\alpha}(\bs S)-1) \leq \lambda(\bar{\bs S})(\cost_{\alpha}(\bar{\bs S})-1)$.
	
	($\geq$). Conversely, assume that an edge set $E_3$ is given on $S$ as a function of $x$, such that $x\mapsto (S, E_3)$ is a factor graphing (of ${\bs S}$) and is connected (for $\mu$-a.e. $x$).
	We construct another set of edges on $\bar S$ as follows. We cannot consider the naive projection of $E_3$ under $\pi$, since this projection increases the expected degree. So, we need less edges.
	
	For every point $y'=(y,\xi)\in S$, let $\varphi(y')$ be the point of $S'$ which is closest to $y'$ under the graph-distance metric corresponding to $E_3$ (break the ties using the marks $u$). If $y'\in S\setminus S'$, let $\psi(y')$ be a neighbor of $y'$ (in $E_3$) which is on a geodesic between $y'$ and $\varphi(y')$ (break the ties using $u$). By connecting $y'$ to $\psi(y')$ with a directed edge, a forest $F$ on $S$ is obtained. The connected components of $F$ are precisely the inverse images of $\varphi$. Let $d^+(y')\in\{0,1\}$ and $d^-(y')\geq 0$ denote the out-degree and the in-degree of $y'$ in $F$. Let $E_4$ be the graphing on $S'$, defined by putting an edge between $(y_1,\xi_1)$ and $(y_2,\xi_2)$ if and only if they are distinct and there is an edge of $E_3$ between $\varphi^{-1}(y_1,\xi_1)$ and $\varphi^{-1}(y_2,\xi_2)$. Let $E_5$ be the set of edges on $\bar S$ obtained by projecting $E_4$. It is clear that $E_5$ is connected. We now bound its cost. In the next formulas, we use bold symbols when $x$ is chosen randomly with distribution $\mu$. Also, we use $\mathbb P_0, \mathbb E_0$ and $\bs o'$ when the Palm distribution is used (see \Cref{def:palm}).
	Since $S'$ projects bijectively on $\bar S$, one has 
	\begin{eqnarray*}
		\omid{\deg(o,\bs E_5)} &=& \omid{\sum_{o'\in \bs S\cap (\{o\}\times \Xi)} \deg(o',\bs E_4)\identity{\bs S'}(o')}\\
		&=& \lambda(\bs S)\omidPalm{0}{\deg(\bs o',\bs E_4)\identity{\bs S'}(\bs o')}\\
		&\leq& \lambda(\bs S)\omidPalm{0}{\sum_{s\in\varphi^{-1}(\bs o')} \sum_{t\sim s} \identity{\{\varphi(t)\neq \bs o'\}}\identity{\bs S'}(\bs o')},
	\end{eqnarray*}
	where $t\sim s$ means that $t$ is a neighbor of $s$ in $\bs E_3$.
	By the mass transport principle~\eqref{eq:mtp}, we may swap $s$ and $\bs o'$ (which implies that $s=\varphi(\bs o')$), and the last formula is equal to
	\begin{eqnarray*}
		\lambda(\bs S) \omidPalm{0}{\sum_{t\sim \bs o'} \identity{\{\varphi(t)\neq\varphi(\bs o')\}} }
		\leq \lambda(\bs S) \omidPalm{0}{\deg(\bs o',E_3)-d^+(\bs o')-d^-(\bs o')}.
	\end{eqnarray*}
	Again, the mass transport principle implies that 
	\begin{eqnarray*}
		\omidPalm{0}{d^-(\bs o')}&=&\omidPalm{0}{d^+(\bs o')}= \omidPalm{0}{\identity{\{\bs o'\not\in \bs S'\}}}\\
		&=& \frac{1}{\lambda(\bs S)} \omid{\sum_{o'\in \bs S\cap (\{o\}\times \Xi)} \identity{\{o'\not\in \bs S'\}} }\\
		&=& \frac{1}{\lambda(\bs S)} \omid{(m_{\bs S}(o)-1)\identity{o\in\bar{\bs S}}}\\
		&=& \frac{1}{\lambda(\bs S)} \omid{(m_{\bs S}(o)-m_{\bar{\bs S}}(o))}\\
		&=& \frac{1}{\lambda(\bs S)} (\lambda(\bs S)-\lambda(\bar{\bs S})).
	\end{eqnarray*}
	So, the previous inequality implies that
	\[
		\omid{\deg(o,\bs E_5)} \leq \lambda(\bs S) \omidPalm{0}{\deg(\bs o',E_3)}-2\lambda(\bs S)+2\lambda(\bar{\bs S}).
	\]
	This proves that $\lambda(\bs S)(\cost_{\alpha}(\bs S)-1) \geq \lambda(\bar{\bs S})(\cost_{\alpha}(\bar{\bs S})-1)$. So, \eqref{eq:induction2} is proved and the proof is completed.	
\end{proof}

\section{Proof of the Main Theorem}
\label{sec:proof}

We first prove the claim for finitely generated groups:

\begin{theorem}
	\label{thm:fixedpriceFG}
	The product of any two infinite finitely generated groups has fixed price one.
\end{theorem}

\begin{proof}
	As mentioned in the introduction, it is enough to assume that $G$ and $G'$ are both nonamenable. Consider the notations $a, a', c, v_n, v'_n$ and $\rho_c$ of \Cref{subsec:notation}. Since $a>1$ and $a'>1$ by nonamenability, one may consider the function $f:\mathbb Z^{\geq 0}\to\mathbb Z^{\geq 0}$ and the sequences $(r_n)_n$ and $(r'_n)_n$ given by \Cref{lem:linear}.
	
	
	\vspace{2mm}
	\textbf{Defining point processes of perturbed diamonds}.	
	Consider two independent iid marks $\bs u''_1$ and $\bs u''_2$ of $G''$. Also, consider the definition of perturbed diamonds from \Cref{sec:diamond}. Fix $n\in\mathbb N$ and let $v''_n$ be the volume of a perturbed diamond with parameter $n$.
	Let $\Phi_n$ be a Bernoulli point process on $G''$ with parameter $1/v''_n$; e.g., put every $x''\in G''$ in $\Phi_n$ if and only if $\bs u''_1(x'')\leq 1/v''_n$.
	Let $\bs C_n$ be the set of pointed diamonds $\left\{(D_n(x''),x''): x''\in \Phi_n\right\}$. 
	So, $\bs C_n$ is a point process in $\mathcal C(G'')$, where the latter is defined in \Cref{subsec:pp}.
	For every perturbed diamond  $(D_n(x''),x'')\in \bs C_n$, replicate the mark $\bs u''_2(x'')$ to all points of $D_n(x'')$ and let $\bs C'_n$ be the resulting collection of pointed marked perturbed diamonds (with a constant marking on each perturbed diamond). This marking will be used only to distinguish the overlaps of the perturbed diamonds. Note that $\bs C'_n$ is a point process in $\mathcal C'(G'')$, where the latter is defined in \Cref{subsec:pp} (with mark space $\Xi:=[0,1]$).
	
	\vspace{2mm}
	\textbf{The weak limit of the point processes of perturbed balls}.
	We will study the weak limit of $\bs C'_n$ as $n$ tends to infinity along a suitable subsequence. 
	Note that for every $y''\in G''$, the number of marked perturbed diamonds $(B,x'';m)\in \bs C'_n$ such that $y''\in B$ is a binomial random variable with parameters $(v''_n,1/v''_n)$. Therefore, \Cref{lem:tight} implies that the sequence $(\bs C'_n)_n$ is tight as a sequence of point processes in $\mathcal C'(G'')$. So, by refining $(r_j)_j$ and $(r'_j)_j$ if necessary, we may assume that $\bs C'_n$ converges weakly to a point process $\bs C'$ in $\mathcal C'(G'')$. Then, by letting $\bs C$ be the collection of unmarked elements of $\bs C'$, one also has $\bs C_n\to \bs C$ weakly, as point processes on $\mathcal C(G'')$.
	
	\begin{lemma}
		\label{lem:horoball}
		Almost surely, $\bs C\neq\emptyset$ and every element of $\bs C$ is a pointed perturbed horoball of type II (possibly with infinite delay), where the latter is defined in \Cref{def:perturbedHoroball}. 
	\end{lemma}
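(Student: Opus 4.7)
The plan is to apply Lemma~\ref{lem:perturbedHoroball3} to $(\bs C_n)_n$, which will give the type II conclusion once its hypothesis is verified, and then to argue non-emptiness of $\bs C$ separately.

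To verify the hypothesis of Lemma~\ref{lem:perturbedHoroball3}, I would apply Markov's inequality to get
\[
\myprob{\bs C_n\cap A_{n,T}\neq\emptyset}\leq \omid{\card{\bs C_n\cap A_{n,T}}}=\card{\tilde A_{n,T}}/v''_n,
\]
where $\tilde A_{n,T}\subseteq G''$ is the set of centers $x''$ whose pointed perturbed diamond lies in $A_{n,T}$. Unfolding the definition of $A_{n,T}$ and using the sub-multiplicative bound $v_{n+m}\leq v_n v_m$ together with $v'_{r'_n}\asymp v_{r_n}$ from \Cref{lem:linear}, one obtains $\card{\tilde A_{n,T}}=O_T(v_{r_n})$. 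The whole argument thus reduces to the key volume estimate $v''_n/v_{r_n}\to\infty$, which I handle via the slice decomposition
\[
v''_n=\sum_{t=0}^{r_n}s_{r_n-t}\,v'_{f(t)}.
\]
Using $a=(a')^c$ together with the almost-linearity of $f$ guaranteed by \Cref{lem:linear}, each ``good'' index $t$ (for which both $s_{r_n-t}\asymp v_{r_n-t}$ and $v'_{f(t)}\asymp (a')^{f(t)}$) contributes a summand of order $a^{r_n-t}\cdot a^t=a^{r_n}\asymp v_{r_n}$; the delicate construction of $f$ and of the subsequence $(r_n)_n$ in \Cref{lem:linear} guarantees that the count of such good slices tends to infinity, which delivers $v''_n/v_{r_n}\to\infty$. \Cref{lem:perturbedHoroball3} then gives that a.s.\ every element of $\bs C$ is a perturbed pointed horoball of type II (possibly with infinite delay).

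For the non-emptiness of $\bs C$, for any fixed $y''\in G''$ the number of $(B,x'')\in\bs C_n$ with $y''\in B$ is Binomial$(v''_n,1/v''_n)$: by the symmetry of perturbed diamonds under inversion, $\card{\{x''\in G'':y''\in D_n(x'')\}}=v''_n$. This count converges in distribution to Poisson$(1)$. Since $G''$ is discrete, $\{y''\}$ is both open and compact, so $\{(B,\theta)\in\mathcal C(G''):y''\in B\}$ is both open and closed in the Fell topology, and is therefore a continuity set for any Borel probability measure. The standard fact that weak convergence of point processes implies convergence of counts in continuity sets gives that the count in $\bs C$ is itself Poisson$(1)$-distributed, whence $\myprob{\bs C\neq\emptyset}\geq 1-e^{-1}>0$. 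Since $\{\bs C=\emptyset\}$ is translation-invariant and $\bs C$ inherits ergodicity from the mixing i.i.d.\ marks $(\bs u''_1,\bs u''_2)$ generating the $\bs C_n$, the $0/1$ law upgrades this to probability $1$.

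The principal obstacle is the volume estimate $v''_n/v_{r_n}\to\infty$: without additional assumptions on $G$ or $G'$ one cannot assert $s_k\asymp v_k$ for all $k$, so the estimate hinges essentially on the ``good slice'' mechanism delivered by the fine-tuned construction of $f$ in \Cref{lem:linear}. All remaining steps are essentially routine once this key estimate is in hand.
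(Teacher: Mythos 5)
Your overall scheme for the type-II conclusion is the same as the paper's: bound $\myprob{\bs C_n\cap A_{n,T}\neq\emptyset}$ by a first-moment estimate, reduce to $v''_n/\max\{v_{r_n},v'_{r'_n}\}\to\infty$, and feed this into \Cref{lem:perturbedHoroball3}. However, your justification of that key estimate does not hold up. You ask for ``good'' indices $t$ where $v'_{f(t)}\asymp (a')^{f(t)}$ and (implicitly) $v_{r_n-t}\asymp a^{r_n-t}$, and then write $v_{r_n-t}\cdot a^t \asymp a^{r_n}\asymp v_{r_n}$; but $v_k\asymp a^k$ (and similarly $v'_k\asymp (a')^k$) is in general false — $v_n^{1/n}\to a$ is much weaker, and e.g. $v_n\sim n\,a^n$ is perfectly possible. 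So the ``good'' indices in your sense may simply not exist, and \Cref{lem:linear} does not promise them. What \Cref{lem:linear} actually delivers is $v'_{r'_k}\asymp v_{r_k}$ at the subsequence indices, and the paper combines this with nonamenability (giving $s_t\geq\epsilon v_t$ for all $t$) and submultiplicativity $v_{r_n}\leq v_{r_n-r_k}\,v_{r_k}$ to see that each of the $n+1$ slices $t=r_0,\dots,r_n$ contributes $\gtrsim v_{r_n}$. That elementary chain sidesteps any appeal to sharp exponential asymptotics.

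The non-emptiness argument has a genuine gap: you assert that $\bs C$ ``inherits ergodicity'' from the $\bs C_n$ and invoke a $0/1$ law, but a weak limit of factors of i.i.d.\ (even of mixing processes) need not be ergodic, so passing from $\myprob{\bs C\neq\emptyset}\geq 1-e^{-1}$ to $\myprob{\bs C\neq\emptyset}=1$ requires justification you have not supplied. The paper avoids this entirely: instead of counting elements containing a single point, it counts elements meeting an enlarging window $E_T=\{(B,\theta): B\cap(\{o\}\times\oball{T}{o'})\neq\emptyset\}$, shows using nonamenability of $G'$ that the expected count $\card{E'_{n,T}}/v''_n\geq (1-\epsilon)^{-T}$ grows without bound in $T$, and concludes directly that $\myprob{\bs C\cap E_T=\emptyset}\to 0$ as $T\to\infty$. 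If you replace your single test point $o''$ by such a growing window, your Poisson-limit computation upgrades to the full conclusion without any ergodicity appeal; as written, the step is not justified.
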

	For the ease of reading, the proof is given after the proof of the theorem. 
	It is not hard to rule out the pointed perturbed horoballs with infinite delay as well (using nonamenability), but this is not needed in what follows (discreteness of $\bs C$ implies that $\bs C$ can have at most finitely many of such elements).
	One may also modify $\bs C$ to obtain a Poisson process of (non-perturbed) horoballs of type II (by replacing each perturbed horoball with the smallest horoball that contains it), but this is not necessary.
	
	One can show that $\bs C'$ is a Poisson point process on $\mathcal C'(G'')$ and the marks of the different elements of $\bs C'$ are iid (conditionally on the collection of unmarked elements). More precisely:
	\begin{itemize}
		\item $\bs C$ is a Poisson point process on $\mathcal C(G'')$ (with a suitable intensity measure),
		\item (The distribution of) $\bs C'$ is obtained by adding marks to the elements of $\bs C$ as follows: For every pointed perturbed horoball $h:=(B'',\theta'')\in \bs C$, choose a random number $\bs m_0^{(h)}\in[0,1]$ uniformly, and let $\bs m^{(h)}(y''):=\bs m_0^{(h)}$ for every $y''\in B''$. Choose the mentioned random numbers $(\bs m_0^{(h)})_{h\in\bs C}$ independently (given $\bs C$).
	\end{itemize}
	Note that $\bs C$ might be non-simple (if its intensity measure has atoms). In this case, the multiple elements of $\bs C$ appear with different markings in $\bs C'$ a.s.
	
	Let $\bs w''$ be an iid markings of $G''$ with the uniform distribution on $[0,1]$, and independent from $\bs C'$. 
	We will prove that the cost of $(\bs C', \bs w'')$ is one.
	Since the latter is also a weak factor of iid and is essentially free, the claim of the theorem is implied by \Cref{lem:monotonicity} (note that the action on the state space of $(\bs C', \bs w'')$ is continuous).
	
	To prove the claim that $(\bs C', \bs w'')$ has cost one, we will construct a low-cost graphing of every pointed horoball in $\bs C'$, and then, extend it to a low-cost graphing of the union of the horoballs. A difficulty is that we do not know if the horoball containing $o''$ is hyperfinite or not (which is true in {the case of~\cite{FMW}}), since a weak limit of finite unimodular graphs (here, the diamond of $\bs C_n$ containing $o''$) is not necessarily hyperfinite in general. Instead, we will construct the low-cost graphing using the distinguished center of each perturbed horoball, which is already available in $\bs C$.

	Let $\bs S\subseteq G''\times[0,1]$ be the union of the (unpointed) marked perturbed horoballs in $\bs C'$, which is a factor of $\bs C'$. So, the above lemma gives that $\bs S\neq\emptyset$ a.s.
	To show that $(\bs C',\bs w'')$ has cost one, by the induction formula in \Cref{thm:induction}, it is enough to construct a low-cost graphing of $\bs S$ as a factor of $(\bs C',\bs w'')$. 
	%
	
	\vspace{2mm}
	\textbf{Constructing a graphing inside the perturbed horoballs}.
	We recall that for the IPVT cells in products of regular trees, \cite{FMW} constructs a graphing inside the cells by proving the amenability of a typical cell. We do not prove or disprove the amenability of a typical horoball here. Instead, we construct a much simpler graphing directly.
	
	Consider an arbitrary element $V:=(B'',\theta''; m'')\in \bs C'$, where $m''$ is a constant marking of $B''$. 
	By \Cref{lem:horoball}, we may assume that $B''$ is a marked perturbed horoball of type II centered at $\theta''=(\theta,\theta')$, where $\theta\in\partial G$ and $\theta'\in\partial G'$. Consider an arbitrary point $x''=(x,x')\in B''$. Let $\tau^V(x)$ be a neighbor of $x$ in $G$ such that $d_{\theta}(\tau^V(x))= d_{\theta}(x)-1$ (which exists by \Cref{lem:geodesic}). If there are more than one option for $\tau^V(x)$, choose the \textit{smallest} one, in an arbitrary fixed ordering of $G$. Note that no randomness is needed to define $\tau^V(x)$, and $\tau^V(x)$ does not depend on the second coordinate $x'$ of $x''$. One also has $x''_2:=(\tau^V(x),x')\in B''$ since this point is closer to $\theta''$ than $x''$ (this property holds for perturbed diamonds and is thus inherited to perturbed horoballs of type II). By connecting $(x'',m'')\in B''\times [0,1]$ to $(x''_2, m'')$ with a directed edge, a forest is obtained using only horizontal edges (here, we have regarded $m''$ as a number because it is constant). Let $\Pi_1$ be the union of these forests for all $(B'',\theta''; m'')\in\bs C'$, which is a forest on $\bs S$ (proving the measurability of the constructions is omitted for brevity). But $\Pi_1$ is clearly disconnected. We will augment it by adding a small percolation. Before that, let $\gamma^V(x'',m'')$ denote the infinite path obtained by following the out-going edges of $\Pi_1$ starting from $(x'',m'')$. 
	
	
	Consider the following percolation.
	Fix $\epsilon>0$ and any symmetric function $p$ on $G''\times G''$ such that $p$ is equivariant under the action of $G''$, $p$ is positive everywhere and $\sum p(o'',\cdot)=1$.
	Let $\Pi_2$ be a percolation on $G''\times G''$ with parameter $\epsilon p(\cdot,\cdot)$, where the percolation is chosen as a factor of the iid marks $\bs w''$ described above. 
	Let $\pi$ denote the projection from $\bs S\times \bs S$ to $G''\times G''$ obtained by forgetting the marks, and let $\Pi_3:=\Pi_1\cup \pi^{-1}(\Pi_2)$. We claim that $\Pi_3$ is a connected graphing on $\bs S$. 
	Let $V$ be an (unpointed) marked perturbed horoball in $\bs C'$ and consider two points $(x,x'_1,m)$ and $(x,x'_2,m)$ of $V$ with identical first coordinates.
	The paths $\gamma^V(x,x'_1,m)$ and $\gamma^V(x,x'_2,m)$ move parallel to each other, and hence, remain at bounded distance from each other.
	Thus, there exists at least one open path in $\Pi_2$ connecting them a.s. So, $\gamma(x,x'_1,m)$ is connected to $\gamma(x,x'_2,m)$ in $\Pi_3$. This implies that the set of points of the form $(x,\cdot,m)$ in $\bs S'$ (we call this set a \textbf{vertical section} of $V$) belong to the same component of $\Pi_3$. Assume $(x_1,\cdot,m)$ and $(x_2,\cdot,m)$ are two vertical sections of $V$. One can see that there exist infinitely many $x'\in G'$ such that both $(x_1,x',m)$ and $(x_2,x',m)$ belong to $V$ (indeed, it is enough that $d_{\theta'}(x')$ is close enough to $-\infty$, which is possible by \Cref{lem:geodesic} for $\theta'$). Since $p((x_1,x'),(x_2,x'))$ does not depend on $x'$, there are infinitely many open edges of $\Pi_3$ between these two vertical sections. Hence, they are in the same component of $\Pi_3$ a.s. Therefore, any marked horoball $V$ in $\bs C'$ lies entirely in a component of $\Pi_3$ a.s.
	
	\vspace{2mm}
	\textbf{Connecting different perturbed horoballs}. We will show that $\Pi_3$ connects all perturbed horoballs in $\bs C'$ a.s.	
	\begin{lemma}
		\label{lem:infiniteTouching}
		Every two horoballs $B_1$ and $B_2$ of type II have the infinite semi-touching property~\eqref{eq:infiniteTouching}; i.e., there exist sequences $(\xi^{(1)}_j)_j$ and $(\xi^{(2)}_j)_j$ in $B_1$ and $B_2$ such that $\sup_j \rho_c(\xi^{(1)}_j,\xi^{(2)}_j)<\infty$.
	\end{lemma}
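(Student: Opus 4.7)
The strategy is the one sketched in \Cref{intro:sketch}: construct two ``slope $c$'' paths in $G''$ (each unit step in $G$ paired with approximately $c$ unit steps in $G'$), one contained in $B_1$ and one in $B_2$, sharing their $G$-coordinate and differing by only a bounded amount in their $G'$-coordinate. The key observation is that while $(\theta_1,\theta'_1)$ is the center of $B_1$ and $(\theta_2,\theta'_2)$ is that of $B_2$, a trajectory moving toward the \emph{mixed} target $(\theta_1,\theta'_2)$ has both horofunctions $d_{\theta''_1}$ and $d_{\theta''_2}$ bounded along it: in each case the drop in one coordinate's horofunction is cancelled, up to integer rounding, by the $1$-Lipschitz increase in the other after scaling by $1/c$.

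Concretely, invoke \Cref{lem:geodesic} to obtain a path $(\alpha_k)_{k\geq 0}$ in $G$ with $\alpha_0=o$ and $d_{\theta_1}(\alpha_k)=-k$, and a path $(\beta_l)_{l\geq 0}$ in $G'$ with $\beta_0=o'$ and $d_{\theta'_2}(\beta_l)=-l$. Set $l^{(1)}_k:=\floor{c(k+\delta_1)}$ and $l^{(2)}_k:=\ceil{c(k-\delta_2)}$, and, for $k$ large enough that both are non-negative, define
\[\xi^{(1)}_k:=(\alpha_k,\beta_{l^{(1)}_k}),\qquad \xi^{(2)}_k:=(\alpha_k,\beta_{l^{(2)}_k}).\]
To verify $\xi^{(1)}_k\in B_1$, the $1$-Lipschitz property of $d_{\theta'_1}$ together with $d_{\theta'_1}(o')=0$ gives $d_{\theta'_1}(\beta_{l^{(1)}_k})\leq l^{(1)}_k$, so $d_{\theta''_1}(\xi^{(1)}_k)\leq -k+l^{(1)}_k/c\leq\delta_1$ by construction of $l^{(1)}_k$. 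Symmetrically, $d_{\theta_2}(\alpha_k)\leq k$ yields $d_{\theta''_2}(\xi^{(2)}_k)\leq k-l^{(2)}_k/c\leq\delta_2$.

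Since the two sequences share their $G$-coordinate, their distance reduces to
\[\rho_c(\xi^{(1)}_k,\xi^{(2)}_k)=d'(\beta_{l^{(1)}_k},\beta_{l^{(2)}_k})/c\leq|l^{(1)}_k-l^{(2)}_k|/c,\]
which is uniformly bounded by $|\delta_1+\delta_2|+2/c$; and the pairs are mutually distinct because the $\alpha_k$ take distinct values of $d_{\theta_1}$. The only subtleties are the integer rounding in $l^{(i)}_k$ and the non-negativity constraint $l^{(i)}_k\geq 0$, both absorbed by taking $k$ sufficiently large. I do not anticipate a genuine obstacle: the argument works for arbitrary real delays $\delta_1,\delta_2$ and uses nothing beyond the horofunction-decreasing paths provided by \Cref{lem:geodesic}, avoiding any appeal to stabilizers of horoballs or other techniques from \cite{FMW}.
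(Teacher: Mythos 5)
Your proof is correct and follows essentially the same route as the paper's: build two slope-$c$ trajectories heading toward the mixed boundary point $(\theta_1,\theta'_2)$, then show each stays in its horoball because the one-coordinate's horofunction decrease exactly cancels (after the $1/c$ weighting) the worst-case Lipschitz increase in the other, and observe the two trajectories stay within bounded $\rho_c$-distance. The only difference is bookkeeping: the paper anchors the construction at arbitrary points $x''_1\in B_1$, $x''_2\in B_2$ and splices connecting paths before the geodesic-like rays, whereas you start the rays at the origin and absorb the delays directly into the index offsets $l^{(i)}_k$, which is slightly cleaner and avoids the paper's small indexing awkwardness around the offsets $k,k'$.
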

	The proof of this lemma is given after the proof of the theorem.
	This lemma implies that any two perturbed horoballs of type II also enjoy the infinite semi-touching property. Note that, if $(\xi^{(1)}_j)_j$ and $(\xi^{(2)}_j)_j$ are two paths within bounded distance, then there is a strictly positive lower bound on $p(\xi^{(1)}_j, \xi^{(2)}_j)$. This implies that infinitely many of the pairs $(\xi^{(1)}_j, \xi^{(2)}_j)$ are open in $\Pi_2$. Hence, any two (unpointed) perturbed marked horoballs of $\bs C'$ are in the same component of $\Pi_3$. So, it is proved that $\Pi_3$ is connected a.s.
	
	\vspace{2mm}
	\textbf{The graphing on the marked horoballs has small cost}.
	We recall the notions of a typical point of $\bs S$ and Palm distribution from \Cref{def:palm} as follows:
	Let $\bs K(o''):= \bs S\cap (\{o''\}\times [0,1])$.
	If $\mathbb P$ denotes the distribution of $(\bs C',\bs w'')$, let ${\mathbb P_0}$ be the probability measure obtained by biasing $\mathbb P$ by $\card{\bs K(o'')}=m_{\bs S}(o'')$, and then choosing a \textit{new marked root} $\bs o''_2\in \bs K(o'')$ randomly and uniformly. 
	Since every point of $\bs S$ has exactly one out-going edge in $\Pi_1$, and by using the mass transport principle, one obtains $\omidPalm{0}{\deg(\bs o''_2,\Pi_1)} = 2$. Since the parameter $\epsilon$ of the percolation is arbitrarily small and the multiplicity of every point is a Poisson random variable with parameter 1, one obtains that $\omidPalm{0}{\deg(\bs o''_2, \pi^{-1}(\Pi_2))}$ is arbitrarily close to 0. Therefore, 	
	$\frac 12 \omidPalm{0}{\mathrm{deg}(\bs o''_2,\Pi_3)}$ is arbitrarily close to 1.
	
	As already mentioned, the proof of the theorem is completed. To recall the arguments in the backward direction, the induction formula in \Cref{thm:induction} implies that $(\bs C',\bs w'')$ has cost one. Since the latter is a weak factor of iid, it has maximum cost. Hence, $G''$ has fixed price one, and the claim is proved.
\end{proof}

Now, we prove \Cref{lem:horoball,lem:infiniteTouching}, which were stated in the above proof. 

\begin{proof}[Proof of \Cref{lem:horoball}]
	Let $s_t:=v_t-v_{t-1}$ denote the volume of the sphere of radius $n$. The assumption of nonamenability implies that, for some $\epsilon>0$, one has $\forall t: s_t> \epsilon v_t$. Therefore, $v_{t}/v_{t-1}\geq 1/(1-\epsilon)$. We may assume similarly that $v'_t/v'_{t-1}\geq 1/(1-\epsilon)$.
	
	First, we prove that $\bs C\neq\emptyset$ a.s. Fix $T<\infty$ and let $E_T\subseteq\mathcal C(G'')$ be the set of pointed sets that intersect $\{o\}\times \oball{T}{o'}$. It is enough to show that $\lim_T \myprob{\bs C\cap E_T=\emptyset} = 0$. Since $E_T$ is clopen, this is equivalent to showing that $\lim_T \lim_n \myprob{\bs C_n\cap E_T=\emptyset} = 0$. Note that $\card{\bs C_n\cap E_T}$ is a binomial random variable with parameters $(\card{E'_{n,T}}, 1/v''_n)$, where $E'_{n,T}$ is the set of perturbed diamonds of parameter $n$ that are in $E_T$. So, it is enough to prove that $\forall n: \card{E'_{n,T}}/v''_n\geq (1-\epsilon)^{-T}$. By considering the distance $t$ of $o$ from the first coordinate of the center of the perturbed diamonds, one gets
	\begin{eqnarray*}
		\card{E'_{n,T}} = \sum_{t=0}^{r_n} s_t v'_{f(r_n-t)+T}\geq \frac 1{(1-\epsilon)^T} \sum_{t=0}^{r_n} s_t v'_{f(r_n-t)} = \frac 1{(1-\epsilon)^T} v''_n.
	\end{eqnarray*}
	So, it is proved that $\bs C\neq\emptyset$ a.s.
	
	For the second claim, we will use \Cref{lem:perturbedHoroball3}. 
	Fix $T\in\mathbb N$ and consider the set $A_n:=A_{n,T}$ defined before \Cref{lem:perturbedHoroball2}. By \Cref{lem:perturbedHoroball3}, it is enough to prove 
	\begin{equation}
		\label{eq:corners}
		\lim_n \myprob{C_n\cap A_{n}\neq\emptyset} = 0.
	\end{equation}
	Note that $\card{\bs C_n\cap A_{n}}$ is a binomial random variable with parameters $(\card{A_{n}}, 1/v''_n)$. So, it is enough to show that $\card{A_n}/v''_n$ converges to zero.
	One has
	\begin{equation}
		\label{eq:corners2}
		\card{A_n}\leq v_{(r_n+T)} v'_T + v_T v'_{(r'_n+T)}\leq M^T\left(v_{r_n}v'_T + v_T v'_{r'_n} \right),
	\end{equation}
	where $M$ is any number that is larger than the sizes of the generators of $G$ and $G'$. So, it is enough to show that
	\begin{equation}
		\label{eq:corners3}
		\lim_n \frac{v''_n}{\max\{v_{r_n}, v'_{r'_n} \}} = \infty.
	\end{equation}
	By \Cref{lem:linear}, $M$ can be chosen large enough such that $\forall k: v'_{r'_k}/v_{r_k} \in (\frac 1 M, M)$. Now,
	\begin{eqnarray*}
		v''_n &=& \sum_{t=0}^{r_n} s_{(r_n-t)} v'_{f(t)} \geq \epsilon \sum_{t=0}^{r_n} v_{(r_n-t)} v'_{f(t)}\geq \epsilon \sum_{k=0}^n v_{(r_n-r_k)} v'_{r'_k}\\
		&\geq & \frac{\epsilon} M \sum_{k=0}^n v_{(r_n-r_k)} v_{r_k}\geq \frac{\epsilon} M \sum_{k=0}^n v_{r_n} = \frac{\epsilon n} M v_{r_n}\geq \frac{\epsilon n}{M^2} v'_{r'_n}.
	\end{eqnarray*}
	This proves~\eqref{eq:corners3} and the proof is completed. To recall: \eqref{eq:corners3} and~\eqref{eq:corners2} imply that $\norm{A_n}/v''_n\to 0$, which implies~\eqref{eq:corners}, and the claim is implied by \Cref{lem:perturbedHoroball3}.
\end{proof}

\begin{proof}[Proof of \Cref{lem:infiniteTouching}]
	For $i=1,2$, let $B''_i$ be a horoball of type II centered at $\theta''_i:=(\theta_i,\theta'_i)$, where $\theta_i\in\partial G$ and $\theta'_i\in\partial G'$. We will construct two paths in $B''_1$ and $B''_2$ that remain in bounded distance from each other. The construction has the same idea as the paths shown in Figure~\ref{fig:type} in the plane, and is formalized below. 
	A similar claim was proved in~\cite{FMW} for the IPVT cells, in products of regular trees, using the stabilizer of the centers and \textit{double-recurrence}. We observe that, when IPVT cells are replace by horoballs, this idea can be simplified by constructing the paths directly without using stabilizers or other complicated techniques (we do not prove double-recurrence here).
	

	Choose two arbitrary points $x''_i:=(x_i,x'_i)\in B''_i$, $i=1,2$. Let $\eta=(\eta_1,\ldots,\eta_k)$ be a path in $G$ starting from $x_2$ and ending in $x_1$ (for suitable $k$). Using \Cref{lem:geodesic}, continue $\eta$ to obtain an infinite path such that $d_{\theta_1}(\eta_j)$ is strictly decreasing for $j\geq k$. 
	Let $\eta'=(\eta'_1,\ldots,\eta'_{k'})$ be a path in $G'$ starting from $x'_1$ and ending in $x'_2$. Continue $\eta'$ to obtain an infinite path such that $d_{\theta'_2}(\eta'_j)$ is strictly decreasing for $j\geq k'$. These paths are depicted in the following figure (note that $\eta$ and $\eta'$ do not necessarily converge to $\theta_1$ and $\theta'_2$):
	\begin{center}
	\begin{tikzpicture}
		\clip (-2.2,2.48) rectangle (7.2,-.5);
		\begin{scope}
			\draw[thick] (0,0) circle(2cm);
			
			\coordinate (P) at (150:2cm); 
			\filldraw[black] (P) circle(2pt) node[above left] {$\theta_1$};
			\coordinate (Q) at (120:2cm); 
			\filldraw[black] (Q) circle(2pt) node[above left] {$\theta_2$};
			\draw[black, thick] (0,0) -- (P);
			\node at (1.7,1.7) {$\partial G$};
			\node at (-.7,.6) {$\eta$};
			
			\coordinate (start) at (.7,.7);
			\coordinate (end)    at (0,0);	
			\coordinate (ctrl) at (-30:2cm);
			\coordinate (S) at ($0.5*(ctrl)$);
			\coordinate (T) at ($(start)+0.5*(P)$);
			\draw[black, thick]
			(start)
			.. controls (T) and (S) .. (end);
			
			\filldraw[black] (start) circle(2pt) node[above right] {$x_2$};
			\filldraw[black] (end) circle(2pt) node[below left] {$x_1$};
		\end{scope}
		\begin{scope}[shift={(5,0)},rotate=-30] 
			\draw[thick] (0,0) circle(2cm);
			
			\coordinate (P) at (150:2cm); 
			\filldraw[black] (P) circle(2pt) node[above left] {$\theta'_1$};
			\coordinate (Q) at (120:2cm); 
			\filldraw[black] (Q) circle(2pt) node[above left] {$\theta'_2$};
			\node at (0.8,2.3) {$\partial G'$};
			\node at (-.3,1) {$\eta'$};
			
			\coordinate (start) at (.7,.7);
			\coordinate (end)    at (0,0);	
			\coordinate (ctrl) at (-30:2cm);
			\coordinate (S) at ($-0.5*(ctrl)$);
			\coordinate (T) at ($(start)-0.5*(P)$);
			\draw[black, thick] (start) -- (Q);
			\draw[black, thick]
			(start)
			.. controls (T) and (S) .. (end);
			
			\filldraw[black] (start) circle(2pt) node[above right] {$x'_2$};
			\filldraw[black] (end) circle(2pt) node[below left] {$x'_1$};
		\end{scope}
	\end{tikzpicture}
	\end{center}
	Let $\xi^{(1)}$ and $\xi^{(2)}$ be the paths in $G''$ starting from $x''_1$ and $x''_2$ respectively, such that in each $\xi^{(i)}$, the two coordinates move along $\eta$ and $\eta'$ respectively, but the second coordinates moves with \textit{speed} $c$; more precisely,
	\begin{eqnarray*}
		\xi^{(1)}_j:= (\eta_{j+k}, \eta'_{\floor{c j}}),\quad
		\xi^{(2)}_j:= (\eta_j, \eta'_{\ceil{cj+k'}}).
	\end{eqnarray*}
	The construction gives
	\begin{eqnarray*}
		d_{\theta_1}(\eta_{j+k}) = d_{\theta_1}(x_1) - j, &&
		d_{\theta'_1}(\eta'_{\floor{cj}}) \leq  d_{\theta'_1(x'_1)} + \floor{cj},\\ 
		d_{\theta_2}(\eta_j)\leq d_{\theta_2}(x_2) + j, &&
		d_{\theta'_2}(\eta'_{\ceil{cj+k'}}) = d_{\theta'_2}(x'_2) -\ceil{cj}.
	\end{eqnarray*}
	Recalling the definition of $d_{\theta''_i}$ from~\eqref{eq:d_theta''}, one obtains that $d_{\theta''_i}(\xi^{(i)}_j) \leq d_{\theta''_i}(x''_i)$ for $i\in\{1,2\}$ and $j\in\mathbb N$. Hence, $\xi^{(i)}$ is entirely in $B''_i$.
	Also, it is clear that the distance $\rho_c(\xi^{(1)}_j, \xi^{(2)}_j)$ is bounded as $j$ grows. So, the claim is proved.
\end{proof}

Finally, we deduce \Cref{thm:fixedpriceGeneral} as follows.

\begin{proof}[Proof of \Cref{thm:fixedpriceGeneral}]
	Let $G''=G\times G'$ be a product of countable groups. Consider any enumeration $G=\{g_1,g_2,\ldots\}$ and $G'=\{g'_1,g'_2,\ldots\}$ of $G$ and $G'$. Let $G_n$ be the subgroup generated by $g_1,\ldots, g_n$. If $G_n$ is finite for every $n$, then $G$ is amenable and the claim is already known. So, assume that $G$ is nonamenable, and hence, $G_n$ is infinite (and in fact, nonamenable) for large enough $n$. Define $G'_n$ similarly and assume that $G'_n$ is infinite for large enough $n$. Since $G_n$ and $G'_n$ are finitely generated, \Cref{thm:fixedpriceFG} implies that $G''_n:= G_n\times G'_n$ has fixed price 1 for large enough $n$. Note that the subgroups $G''_n$ are nested and $\cup_n G''_n = G''$. So, Lemma~VI.25 of~\cite{Ga00cost} implies that $G''$ has fixed price 1 and the claim is proved. 
\end{proof}


\appendix
\section{Weak Factor and Weak Containment}
\label{ap:weakFactor}

In this appendix, we prove \Cref{lem:monotonicity,lem:WFvsWC}). Recall from~\cite{bookKe10} that $\alpha:\Gamma\curvearrowright (X,\mu)$ is \defstyle{weakly contained} in $\alpha':\Gamma\curvearrowright (X',\mu')$ if for every $\epsilon>0$, $N\in\mathbb N$, Borel sets $A_1,\ldots,A_N$ of $X$ and finite set $F\subseteq \Gamma$, there exist Borel sets $A'_1,\ldots, A'_N$ of $X'$ such that
\begin{equation}
	\label{eq:weakContainment}
	\norm{\mu(g A_i\cap A_j) - \mu'(g A'_i\cap A'_j)}<\epsilon, \quad \forall (i,j), \forall g\in F.
\end{equation}
Note that this notion depends only on the Borel structure of $X$ and $X'$, while the notion of weak factor depends crucially on the topology of $X$.
\begin{proof}[Proof of \Cref{lem:monotonicity}]
	We provide two proofs.
	
	\textit{First Proof}. 
	\Cref{lem:WFvsWC} implies that $\alpha$ is weakly contained in $\alpha'$. So, the claim follows from Kechris's result (Corollary 10.14 of~\cite{bookKe10}).
	
	\textit{Second Proof}.
	First, assume that $X$ is compact. When $X=K^\Gamma$, the claim is just the discrete case of Theorem~5.10 of~\cite{AbMe22} (see also Theorem~2.23 of~\cite{FMW}). The general compact case follows from the fact that \textit{symbolic dynamics} provides a continuous factor map from $X$ to a closed subset of $X^\Gamma$ (i.e., $\iota(x)(g):=g^{-1}x$), and hence, the notion of \textit{being a weak factor} is the same on $X$ and $X^\Gamma$.
	%
	%
	If $X$ is not compact, \Cref{lem:extension} below shows that $\alpha$ can be extended to a continuous pmp action $\beta$ on a metrizable compactification $Y$ of $X$, where $\mu$ is extended by the zero measure outside $X$. This implies that $\cost(\alpha)=\cost(\beta)$. Also, $\beta$ is a weak factor of $\alpha'$. So, the claim follows from the compact case.
\end{proof}

\begin{proof}[Proof of \Cref{lem:WFvsWC}]
	\ref{lem:WFvsWC:1}.
	Let $\varphi_n:X'\to X$ be factor maps such that $\mu_n:=(\varphi_n)_* \mu'$ converges weakly to $\mu$. Assume $A_1,\ldots,A_N$ are Borel subsets of $X$, $\{g_1,\ldots,g_k\}$ is a finite subset of $\Gamma$, and $\epsilon>0$. We will construct $A'_1,\ldots,A'_N$ that satisfy~\eqref{eq:weakContainment}.
	
	For each $i$, one can find $C_i\subseteq A_i\subseteq U_i$ such that $C_i$ is compact, $U_i$ is open and $\mu(U_i\setminus C_i)\leq\epsilon/8$. 
	We call a subset $S\subseteq X$ a \textit{\textit{good} continuity set} if all sets $S, g_1 S,\ldots, g_k S$ are $\mu$-continuity sets (i.e., $\mu(\partial g_j S)=0$). Continuity of the action implies that, given $x\in X$, all except countably many of the closed balls $B_r(x)$ are \textit{good} continuity sets. So, one can cover $C_i$ by the interior of \textit{good} continuity balls contained in $U_i$. Since $C_i$ is compact, finitely many of these sets cover $C_i$. The union $D_i$ of the last balls is a \textit{good} continuity set and satisfies $\mu(D_i\setminus C_i)\leq\epsilon/8$. In particular, $\mu(A_i\Delta D_i)\leq \epsilon/4$. 
	Also, all sets $g_l D_i \cap D_j$ are also $\mu$-continuity sets. Hence, on can choose a large enough $n$ such that:
	$$\forall i,j,l: \norm{\mu_n(g_l D_i \cap D_j)-\mu(g_l D_i \cap D_j)} \leq \epsilon/2.$$
	Since the action preserves $\mu$, one has
	\[
	\mu\big( (g_l D_i \cap D_j)\Delta (g_l A_i \cap A_j)\big) \leq \mu(g_l D_i\Delta g_l A_i) + \mu(D_j\Delta A_j)\leq \frac{\epsilon}{4} + \frac\epsilon 4 = \frac \epsilon 2.
	\]
	By combining the two inequalities, one gets $\norm{\mu_n(g_l D_i \cap D_j)-\mu(g_l A_i \cap A_j)} \leq \epsilon$. Hence, the sets $A'_i:=\varphi_n^{-1}(D_i)$ satisfy the claim.
	
	\ref{lem:WFvsWC:2}. Assume $X$ is compact. Consider the factor map $\iota:X\to X^\Gamma$ defined by $\iota(x)(g):=g^{-1}x$. Since $X$ is compact, Lemma~8 of~\cite{AbWe11} gives factor maps $\Phi_n:X'\to X^\Gamma$ such that $(\Phi_n)_* (\mu')$ converges weakly to $\iota_* \mu$. Since the projection $\pi: X^\Gamma\to X$ (at the identity element of $\Gamma$) is continuous, one obtains that $(\pi\circ\Phi_n)_*(\mu')$ converges weakly to $\mu$, and the claim is proved. The claim regarding the noncompact case is implied by extending $\alpha$ to an action on any metrizable compactification $Y$ of $X$, by letting $\alpha$ be trivial and $\mu$ be zero on $Y\setminus X$.
\end{proof}

\begin{lemma}
	\label{lem:extension}
	If $\alpha$ is a continuous pmp action on $(X,\mu)$, 
	then $\alpha$ can be extended to a continuous pmp action on some metrizable compactification of $X$.
\end{lemma}
\begin{proof}
	If $X$ is locally compact, the one-point compactification works. In the general case, we proceed as follows. Recall that $X$ is Polish. We may assume $d(\cdot,\cdot)<1$.
	Let $(x_i)_i$ be a countable dense set in $X$. The map $\iota: X\to [0,1]^{\Gamma\times\mathbb N}$ defined by $\iota(x):=\big(d(g x, x_i)\big)_{g,i}$ is a homeomorphism of $X$ into a $G_{\delta}$ subset of the Hilbert cube (see Theorem~4.14 of~\cite{bookKe12classical} and its proof). Let $Y:=\overline{\iota(X)}$. For $g\in \Gamma$, the map on $\iota(X)$ that corresponds to $x\mapsto gx$ is just a permutation of the coordinates. So, it extends continuously on $Y$ and a continuous action on $Y$ is obtained. Now, $(Y,\iota_*\mu)$ satisfies the claim.
\end{proof}

\section*{Acknowledgements}
We thank Miklos Abert, Damien Gaboriau, Russell Lyons and Sam Mellick for valuable comments and discussions. In particular, we thank Damien Gaboriau for mentioning that the finitely generated case of \Cref{thm:fixedpriceGeneral} implies the general case, and we thank Russell Lyons for proving \Cref{prop:nonamenable}, which was posed as a problem in earlier versions of the paper.

\bibliography{bib} 

@article {processes,
	AUTHOR = {Aldous, D. and Lyons, R.},
	TITLE = {Processes on unimodular random networks},
	JOURNAL = {Electron. J. Probab.},
	FJOURNAL = {Electronic Journal of Probability},
	VOLUME = {12},
	YEAR = {2007},
	PAGES = {no. 54, 1454--1508},
	ISSN = {1083-6489},
	MRCLASS = {60C05 (05C80 60G50)},
	MRNUMBER = {2354165},
	MRREVIEWER = {Jean-Fran\c{c}ois Delmas},
	DOI = {10.1214/EJP.v12-463},
	URL = {https://doi.org/10.1214/EJP.v12-463},
}

@article{I,
	title={Unimodular {H}ausdorff and {M}inkowski dimensions},
	author={Baccelli, F. and Haji-Mirsadeghi, M.-O. and Khezeli, A.},
	journal={Electronic Journal of Probability},
	volume={26},
	pages={1--64},
	year={2021},
	publisher={The Institute of Mathematical Statistics and the Bernoulli Society}
}

@book {bookLyPe16,
	AUTHOR = {Lyons, R. and Peres, Y.},
	TITLE = {Probability on trees and networks},
	SERIES = {Cambridge Series in Statistical and Probabilistic Mathematics},
	VOLUME = {42},
	PUBLISHER = {Cambridge University Press, New York},
	YEAR = {2016},
	PAGES = {xv+699},
	ISBN = {978-1-107-16015-6},
	MRCLASS = {60C05 (05C05 05C81 28A80 60J10 60J80 60K35 82B41)},
	MRNUMBER = {3616205},
	MRREVIEWER = {Laurent Miclo},
	DOI = {10.1017/9781316672815},
	URL = {https://doi.org/10.1017/9781316672815},
}

@book {bookScWe08,
	AUTHOR = {Schneider, R. and Weil, W.},
	TITLE = {Stochastic and integral geometry},
	SERIES = {Probability and its Applications (New York)},
	PUBLISHER = {Springer-Verlag, Berlin},
	YEAR = {2008},
	PAGES = {xii+693},
	ISBN = {978-3-540-78858-4},
	MRCLASS = {60-02 (52A22 60D05 60G55 62M30)},
	MRNUMBER = {2455326},
	MRREVIEWER = {V. K. Ohanyan},
	DOI = {10.1007/978-3-540-78859-1},
	URL = {https://doi.org/10.1007/978-3-540-78859-1},
}

@article{AbMe22,
	title={Point processes, cost, and the growth of rank in locally compact groups},
	author={Ab{\'e}rt, M. and Mellick, S.},
	journal={Israel Journal of Mathematics},
	volume={251},
	number={1},
	pages={48--155},
	year={2022},
	publisher={Springer}
}

@article{GrJaMe25cost,
	title={Unimodular random graphs with Property ({T}) have cost one},
	author={Grabowski, {\L}. and Jard{\'o}n-S{\'a}nchez, H. and Mellick, S.},
	journal={arXiv preprint arXiv:2506.19454},
	year={2025}
}

@article{AbWe11,
	title={Bernoulli actions are weakly contained in any free action},
	author={Ab{\'e}rt, M. and Weiss, B.},
	journal={Ergodic theory and dynamical systems},
	volume={33},
	number={2},
	pages={323--333},
	year={2013},
	publisher={Cambridge University Press}
}

@article{Le95cost,
	title={On the cost of generating an equivalence relation},
	author={Levitt, G.},
	journal={Ergodic theory and dynamical systems},
	volume={15},
	number={6},
	pages={1173--1181},
	year={1995},
	publisher={Cambridge University Press}
}

@article{Ga00cost,
	title={Co{\^u}t des relations d’{\'e}quivalence et des groupes},
	author={Gaboriau, D.},
	journal={Inventiones mathematicae},
	volume={139},
	number={1},
	pages={41--98},
	year={2000},
	publisher={Springer}
}

@article{FMW,
	title={Poisson-Voronoi tessellations and fixed price in higher rank},
	author={Fraczyk, M. and Mellick, S. and Wilkens, A.},
	journal={To appear in Annals of Mathematics, arXiv preprint arXiv:2307.01194},
	year={2023}
}

@article{Ga24,
	title={Around the orbit equivalence theory of the free groups, cost and l2 Betti numbers},
	author={Gaboriau, D.},
	journal={unpublished note},
	year={2025}
}

@article{Me23,
	title={Gaboriau's criterion and fixed price one for locally compact groups},
	author={Mellick, S.},
	journal={arXiv preprint arXiv:2307.11728},
	year={2023}
}

@inproceedings{Gr81hyperbolic,
	title={Hyperbolic manifolds, groups and actions},
	author={Gromov, M.},
	booktitle={Riemann surfaces and related topics: Proceedings of the 1978 Stony Brook Conference (State Univ. New York, Stony Brook, NY, 1978)},
	volume={97},
	pages={183--213},
	year={1981}
}

@article{BeBo25cost,
	title={Metric criteria for fixed price of countable groups},
	author={Bevilacqua, E. and Bowen, L.},
	journal={arXiv preprint arXiv:2510.05459},
	year={2025}
}

@article{BuCuPe25cheeger,
	title={On Cheeger constants of hyperbolic surfaces},
	author={Budzinski, T. and Curien, N. and Petri, B.},
	journal={Inventiones mathematicae},
	pages={1--20},
	year={2025},
	publisher={Springer}
}

@article{DCE+23,
	title={Ideal Poisson-Voronoi tessellations on hyperbolic spaces},
	author={D'Achille, M. and Curien, N. and Enriquez, N. and Lyons, R. and {\"U}nel, M.},
	journal={To appear in Annals of Probability, arXiv preprint arXiv:2303.16831},
	year={2025}
}

@article{JaKhMe25+,
	title={Ongoing work},
	author={Jard{\'o}n-S{\'a}nchez, H. and Khezeli, A. and Mellick, S.},
	journal={draft},
	year={2025+}
}

@article{HuPe20Kazhdan,
	title={Kazhdan groups have cost 1},
	author={Hutchcroft, T. and Pete, G.},
	journal={Inventiones mathematicae},
	volume={221},
	number={3},
	pages={873--891},
	year={2020},
	publisher={Springer}
}

@article{LyNa11,
	title={Perfect matchings as IID factors on non-amenable groups},
	author={Lyons, R. and Nazarov, F.},
	journal={European Journal of Combinatorics},
	volume={32},
	number={7},
	pages={1115--1125},
	year={2011},
	publisher={Elsevier}
}

@phdthesis{sandeep19,
	title={The low-intensity limit of Bernoulli-Voronoi and Poisson-Voronoi measures},
	author={Bhupatiraju, Sandeep},
	year={2019},
	school={Indiana University, Bloomington}
}

@article{Ga02invariants,
	title={Invariants $\ell^{2}$ de relations d’{\'e}quivalence et de groupes},
	author={Gaboriau, D.},
	journal={Publications math{\'e}matiques de l'IH{\'E}S},
	volume={95},
	pages={93--150},
	year={2002}
}

@book{bookKe10,
	title={Global aspects of ergodic group actions},
	author={Kechris, Alexander S},
	number={160},
	year={2010},
	publisher={American Mathematical Soc.}
}

@article{Ca23,
	title={Asymptotic invariants of lattices in locally compact groups},
	author={Carderi, Alessandro},
	journal={Comptes Rendus. Math{\'e}matique},
	volume={361},
	number={G1},
	pages={375--415},
	year={2023}
}

@book{bookKe12classical,
	title={Classical descriptive set theory},
	author={Kechris, Alexander},
	volume={156},
	year={2012},
	publisher={Springer Science \& Business Media}
}
\bibliographystyle{alpha.bst}

\end{document}